\journalname{Graphs and Combinatorics}
\newcommand{\ellteekay}{\ensuremath{\ell{\mathsf T}k}\,}
\newcommand{\reffig}[1]{Figure \ref{fig.#1}}
\newcommand{\Pr@}{\operatorname{Pr}}
\newcommand{\E@}{\operatorname{E}}
\newcommand{\Var@}{\operatorname{Var}}
\renewcommand{\Pr}[1]{\ensuremath{\Pr@\left[{#1}\right]}}
\newcommand{\E}[1]{\ensuremath{\E@\left[{#1}\right]}}
\newcommand{\Var}[1]{\ensuremath{\Var@\left[{#1}\right]}}
\newcommand{\card}[1]{\ensuremath{\left\vert #1 \right\vert}}
\newtheorem{theorem}{Theorem}
\newtheorem{corollary}[theorem]{Corollary}
\newtheorem{lemma}[theorem]{Lemma}
\newtheorem{algorithm}[theorem]{Algorithm}
\newtheorem{problem}{Problem}
\newcommand{\reflem}[1]{Lemma \ref{lem.#1}}
\newcommand{\refthm}[1]{Theorem \ref{thm.#1}}
\newcommand{\refalg}[1]{Algorithm \ref{alg.#1}}
\newcommand{\refcor}[1]{Corollary \ref{cor.#1}}
\newcommand{\labellem}[1]{\label{lem.#1}}
\newcommand{\labelthm}[1]{\label{thm.#1}}
\newcommand{\labelalg}[1]{\label{alg.#1}}
\newcommand{\labelcor}[1]{\label{cor.#1}}
\author{Ileana Streinu\inst{1}\thanks{Research of both authors funded by the NSF under grants NSF CCF-0430990 and NSF-DARPA CARGO CCR-0310661 to the first author.} \and Louis Theran\inst{2}}
\institute{Department of Computer Science, Smith College, Northampton, MA. \email{streinu@cs.smith.edu} 
\and Department of Computer Science, University of Massachusetts Amherst. \email{theran@cs.umass.edu}} 
\title{Sparsity-certifying Graph Decompositions} 
\newcommand{\restateenv}{ZZZ}
\newenvironment{restate}[1]{
  \renewcommand{\restateenv}{restate.#1}
  \newtheorem*{\restateenv}{\refthm{#1}}
  \begin{\restateenv}
}{\end{\restateenv}}
\newenvironment{restatecor}[1]{
  \renewcommand{\restateenv}{restate.#1}
  \newtheorem*{\restateenv}{\refcor{#1}}
  \begin{\restateenv}
}{\end{\restateenv}}
\newcommand{\peb}{\ensuremath{\operatorname{peb}}} 
\newcommand{\grsp}{\ensuremath{\operatorname{span}}} 
\newcommand{\out}{\ensuremath{\operatorname{out}}} 
\begin{document}
	\maketitle
		\begin{abstract}
			We describe a new algorithm, the $(k,\ell)$-pebble game with colors, 
			and use it to obtain a characterization of the family of $(k,\ell)$-sparse graphs 
			and algorithmic solutions to a family of problems concerning tree decompositions of graphs. 
			Special instances of sparse graphs appear in rigidity theory and have received 
			increased attention in recent years. In particular, our colored pebbles generalize 
			and strengthen the previous results of Lee and Streinu \cite{pebblegame} and give a 
			new proof of the Tutte-Nash-Williams characterization of arboricity.  
			We also present a new decomposition that certifies sparsity based on the $(k,\ell)$-pebble 
			game with colors.  Our work also exposes connections between pebble game algorithms and 
			previous sparse graph algorithms by Gabow \cite{gabow:jcss-1995}, Gabow and 
			Westermann \cite{gabow:forests:1992} and Hendrickson
			 \cite{hendrickson:uniqueRealizability:1992}.
		\end{abstract}

		\section{Introduction and preliminaries}

		The focus of this paper
		is decompositions of $(k,\ell)$-sparse graphs into edge-disjoint 
		subgraphs that certify sparsity.  We use {\bf graph} to mean a multigraph, 
		possibly with loops.
		We say that a graph is {\bf $(k,\ell)$-sparse} if no subset of $n'$ vertices 
		spans more than $kn'-\ell$ edges in the graph; a $(k,\ell)$-sparse graph 
		with $kn'-\ell$ edges is {\bf $(k,\ell)$-tight}.
		We call 
		the range $k\le \ell\le 2k-1$ the upper range of sparse graphs
		and $0\le \ell\le k$ the lower range.

		In this paper, we  
		present  efficient algorithms for finding decompositions 
		that certify sparsity in the upper range of $\ell$.  Our algorithms
		also apply in the lower range, which was already addressed by 
		\cite{gabow:jcss-1995,gabow:forests:1992,Ed65,RoTa85,edmonds:matroidpolyhedra}.
	    A decomposition certifies 
		the sparsity of a graph if the sparse graphs and graphs admitting 
		the decomposition coincide.

		Our algorithms are based on a new characterization of sparse graphs, 
		which we call the {\bf pebble game with colors}.  
		The pebble game with colors is a 
		simple graph construction rule that produces a sparse graph along with a 
		sparsity-certifying decomposition.

		We define and study a canonical class of pebble game constructions, 
		which correspond to previously studied decompositions of sparse graphs 
		into edge disjoint trees.  
		Our results provide a unifying framework for all the previously known special cases, 
		including Nash-Williams-Tutte and \cite{whiteley:union-matroids,haas:2002}.
		Indeed, in the lower range,
		canonical pebble game constructions capture the properties of the augmenting paths 
		used in matroid union and intersection algorithms\cite{gabow:jcss-1995,gabow:forests:1992}.  
		Since the sparse
		graphs in the upper range are not known to be unions or intersections of the matroids 
		for which there are efficient augmenting path algorithms, these do not easily apply in
		the upper range.  Pebble game with colors constructions may thus be considered 
		a strengthening of augmenting paths to the upper range of matroidal sparse graphs.

\subsection{Sparse graphs}
	A graph is {\bf $(k,\ell)$-sparse} if for any non-empty subgraph with $m'$
	edges and $n'$ vertices,
	\(
		m' \le kn'-\ell. 
	\)
We observe that this condition implies that $0\le \ell\le 2k-1$, and from now
on in this paper we will make this assumption.  A sparse graph that has $n$ vertices and exactly $kn-\ell$ edges is called {\bf tight}.

For a graph $G=(V,E)$, and $V'\subset V$, we use the notation $\grsp (V')$ for the number of edges in the subgraph induced by $V'$.  In a directed graph, $\out (V')$ is the number of edges with the tail 
in $V'$ and the head in $V-V'$; for a subgraph induced by $V'$, we call such an edge an 
{\bf out-edge}.

There are two important types of subgraphs of sparse graphs. 
A {\bf block} is a tight subgraph of a sparse graph. A {\bf component} is a maximal block.

\begin{table}
	\begin{tabular}
		{|l|l|} \hline {\bf Term} & {\bf Meaning} \\
		\hline \hline Sparse graph $G$ & Every non-empty subgraph on $n'$ vertices has $\le kn'-\ell$ edges\\
		\hline Tight graph $G$ & $G=(V,E)$ is sparse and $\card{V}=n$, $\card{E}=kn-\ell$ \\
		\hline Block $H$ in $G$ & $G$ is sparse, and $H$ is a tight subgraph \\
		\hline Component $H$ of $G$ & $G$ is sparse and $H$ is a maximal block \\
		\hline Map-graph & Graph that admits an out-degree-exactly-one orientation \\
		\hline $(k,\ell)$-maps-and-trees &  Edge-disjoint union of $\ell$ trees and $(k-\ell)$ map-grpahs\\
		\hline \ellteekay & Union of $\ell$ trees, each vertex is in exactly $k$ of them \\
		\hline Set of tree-pieces of an $\ellteekay$ induced on $V'\subset V$ & Pieces of trees in the
		\ellteekay spanned by $E(V')$  \\
		\hline Proper \ellteekay & Every $V'\subset V$ contains $\ge\ell$ 
			pieces of trees from the \ellteekay \\
		\hline 
	\end{tabular}
	\caption{Sparse graph and decomposition terminology used in this paper.}	
	 \label{tab.sparse-terminology} 
\end{table}

Table \ref{tab.sparse-terminology} summarizes the sparse graph terminology used in this paper.

\subsection{Sparsity-certifying decompositions}
	A $k$-arborescence is a graph that admits a decomposition into $k$ edge-disjoint spanning trees. \reffig{colored-3-tree} shows an example of a 
	$3$-arborescence.  The $k$-arborescent graphs are described by the well-known theorems of Tutte \cite{tutte:decomposing-graph-in-factors-1961} and Nash-Williams \cite{nash-williams:decomposition-into-forests:1964} as exactly the $(k,k)$-tight graphs. 

	A {\bf map-graph} is a graph that admits an orientation such that the out-degree of each vertex is exactly one. A $k$-{\bf map-graph} is a graph that admits a decomposition into $k$ edge-disjoint map-graphs.  \reffig{colored-2-map} shows an example of a 2-map-graphs; the edges
	are oriented in one possible configuration certifying that each color forms a map-graph.
	 Map-graphs may be equivalently defined (see, e.g., \cite{oxley:matroid}) as having exactly one cycle per connected component.\footnote{Our terminology follows Lov{\'{a}}sz in \cite{lovasz:combinatorial-problems}. In the matroid literature map-graphs are sometimes known as bases of the bicycle matroid or spanning pseudoforests.}

	A {\bf $(k,\ell)$-maps-and-trees} is a graph that admits a decomposition into $k-\ell$ 
	edge-disjoint map-graphs and $\ell$ spanning trees.
	
Another characterization of map-graphs, which we will use extensively in this paper, is as the $(1,0)$-tight graphs \cite{whiteley:union-matroids,maps}. The $k$-map-graphs are evidently $(k,0)$-tight, and \cite{whiteley:union-matroids,maps} show that the converse holds as well. 

\begin{figure}[htbp]
	\centering
	\subfigure[]{\includegraphics[width=0.4\textwidth]{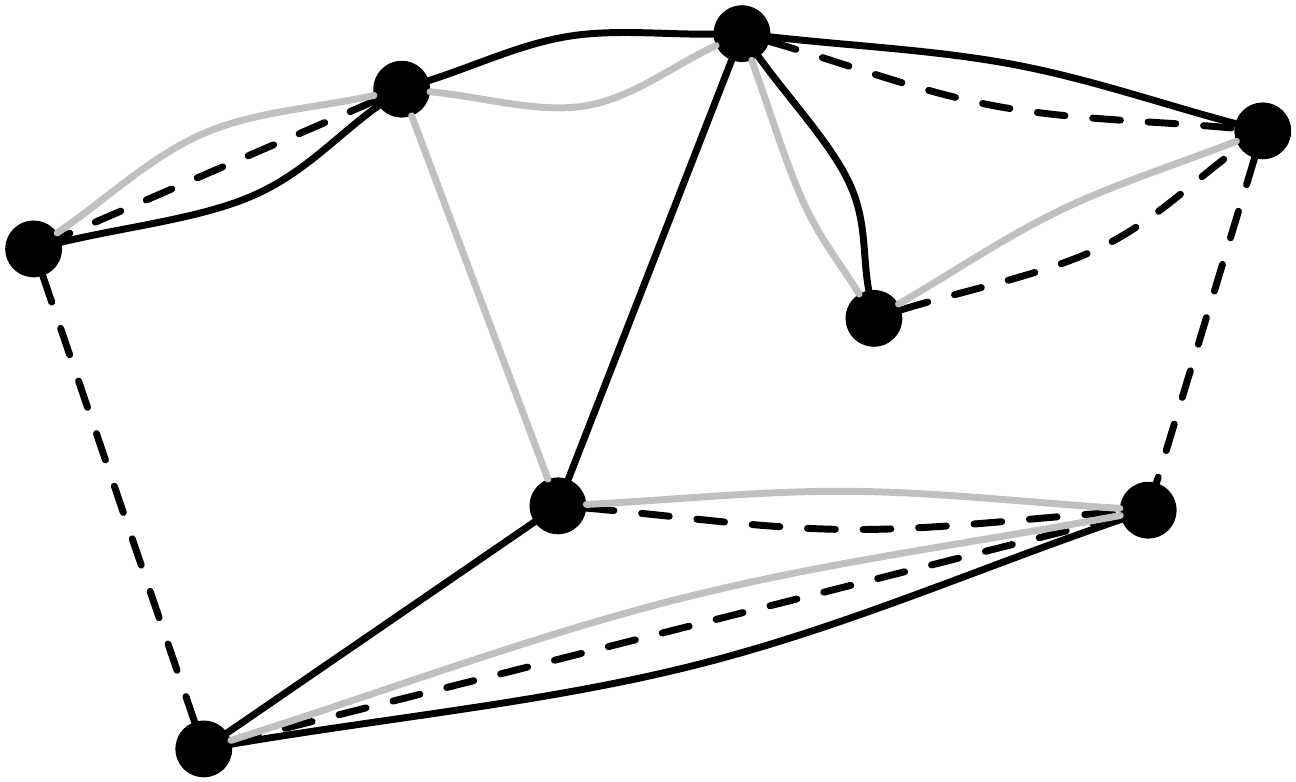}\label{fig.colored-3-tree}}
	\hspace{.3in}
	\subfigure[]{\includegraphics[width=0.33\textwidth]{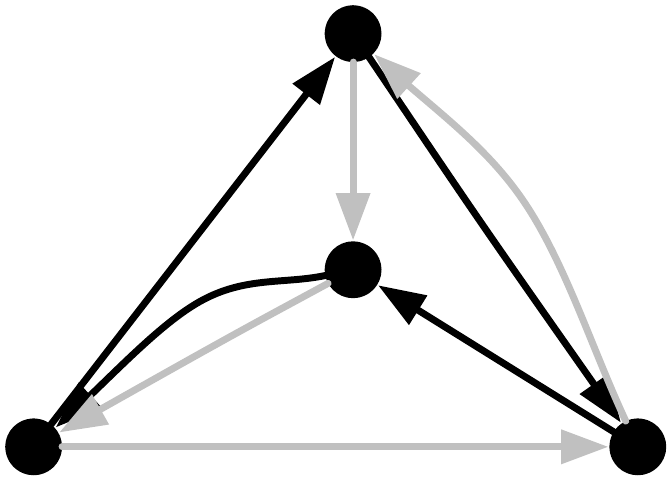}\label{fig.colored-2-map}}
	\hspace{.3 in}
	\subfigure[]{\includegraphics[width=0.27\textwidth]{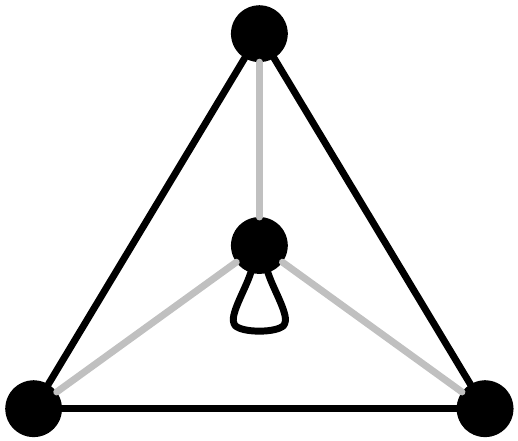}\label{fig.colored-2-1-m-a-t}}
	\caption{Examples of sparsity-certifying decompositions: (a) a $3$-arborescence; (b) a 2-map-graph;  (c)  a $(2,1)$-maps-and-trees.  Edges with the same line style belong to the 
	same subgraph.  The 2-map-graph is shown with a certifying orientation.}
	\end{figure}

A \ellteekay 
is a decomposition into $\ell$ edge-disjoint (not necessarily spanning) trees such that each vertex is in 
exactly $k$ of them.  \reffig{2-3-t-a-t} shows an example of a $3{\mathsf T}2$.

\begin{figure}[htbp]
	\centering
	\subfigure[]{\includegraphics[width=0.33\textwidth]{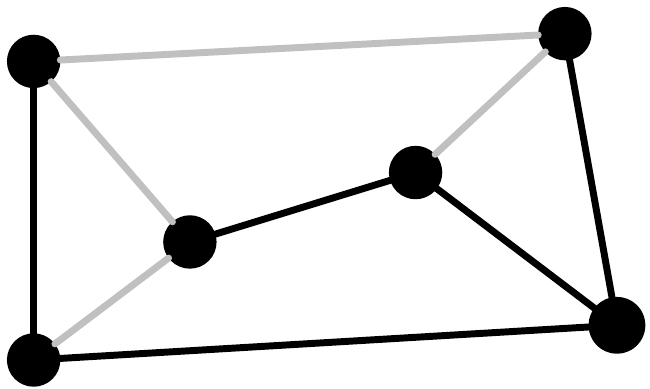}\label{fig.2-3-t-a-t}}
%	\hspace{.3in}
	\subfigure[]{\includegraphics[width=0.33\textwidth]{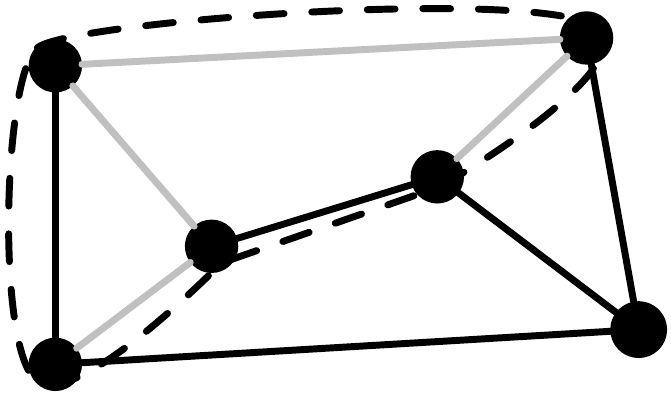}\label{fig.2-3-t-a-t-b}}
%	\hspace{.3in}
	\subfigure[]{\includegraphics[width=0.33\textwidth]{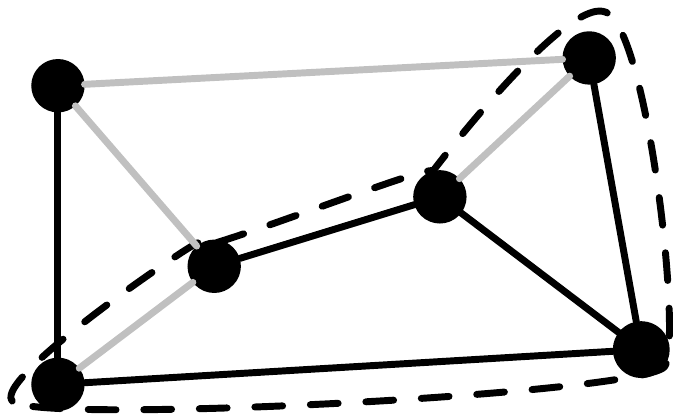}\label{fig.2-3-t-a-t-c}}
	\caption{(a) A graph with a $3\mathsf{T}2$ decomposition; one of the three trees is 
	a single vertex in the bottom right corner.  (b) The highlighted subgraph inside the dashed 
	countour has three black tree-pieces and one gray tree-piece.  (c) The highlighted subgraph inside 
	the dashed countour	has three gray tree-pieces (one is a single vertex) and one black tree-piece.}
	\end{figure}

	Given a subgraph $G'$ of a \ellteekay graph $G$, the {\bf set of tree-pieces} in $G'$ is 
	the collection of the components of the trees in $G$ induced by $G'$ (since $G'$
	is a subgraph each tree may contribute multiple pieces to the set of tree-pieces in $G'$). 
	We observe that these tree-pieces may come from the same tree or be single-vertex ``empty trees.'' 
	It is also helpful to note that the definition of a tree-piece is {\it relative to a specific 
	subgraph}.  An \ellteekay decomposition is \textbf{proper} if the set of tree-pieces in any 
	subgraph $G'$ has size at least $\ell$.
	
	\reffig{2-3-t-a-t} shows a graph with a $3\mathsf{T}2$ decomposition; we note that one
	of the trees is an isolated vertex in the bottom-right corner.  The subgraph in 
	\reffig{2-3-t-a-t-b} has three black tree-pieces and one gray tree-piece: an isolated vertex at
	the top-right corner, and two single edges.  These count as three tree-pieces, even though 
	they come from the same back tree when the whole graph in considered.  \reffig{2-3-t-a-t-c}
	shows another subgraph; in this case there are three gray tree-pieces and one black one.
	
	Table \ref{tab.sparse-terminology} contains the decomposition terminology used in this paper.
	
	\paragraph{The decomposition problem.} 
	We define the {\bf decomposition} problem for sparse graphs as taking a graph as
	its input and producing as output, a decomposition that can be used to certify 
	sparsity.  In this paper, we will study three kinds of outputs: maps-and-trees; 
	proper \ellteekay decompositions; and the pebble-game-with-colors decomposition, 
	which is defined in the next section.

\section{Historical background}
The well-known theorems of Tutte \cite{tutte:decomposing-graph-in-factors-1961} and Nash-Williams \cite{nash-williams:decomposition-into-forests:1964} relate 
the $(k,k)$-tight graphs to the existence of decompositions into edge-disjoint 
spanning trees.  Taking a matroidal viewpoint, Edmonds \cite{Ed65,edmonds:matroidpolyhedra} 
gave another proof of this result using matroid unions.  The equivalence of 
maps-and-trees graphs and tight graphs in the lower range is shown using matroid 
unions in \cite{whiteley:union-matroids}, and matroid augmenting paths are the 
basis of the algorithms for the lower range of \cite{RoTa85,gabow:jcss-1995,gabow:forests:1992}.

In rigidity theory a foundational theorem of Laman \cite{laman} shows that  
$(2,3)$-tight (Laman) graphs correspond to generically minimally rigid bar-and-joint 
frameworks in the plane.  Tay \cite{tay:rigidityMultigraphs-I:1984} proved an analogous  
result for body-bar frameworks in any dimension using $(k,k)$-tight graphs.  Rigidity by counts
motivated interest 
in the upper range, and Crapo \cite{crapo:rigidity:88} proved the equivalence of Laman
graphs and proper $3\mathsf{T}2$ graphs.  Tay \cite{Tay93} used this condition 
to give a direct proof of Laman's theorem and generalized the $3\mathsf{T}2$ condition
to all $\ell\mathsf{T}k$ for $k\le \ell\le 2k-1$.
Haas \cite{haas:2002} studied 
\ellteekay decompositions in detail and proved the equivalence of tight graphs and 
proper \ellteekay graphs for the general upper range.  We observe that aside 
from our new pebble-game-with-colors decomposition, all the combinatorial 
characterizations of the upper range of sparse graphs, including the counts,
have a geometric interpretation 
\cite{whiteley:union-matroids,laman,Tay93,tay:rigidityMultigraphs-I:1984}.

A pebble game algorithm was first proposed in \cite{jacobs:hendrickson:PebbleGame:1997a} as an elegant 
alternative to Hendrickson's Laman graph  algorithms
\cite{hendrickson:uniqueRealizability:1992}. Berg and Jordan  
\cite{berg:jordan:2003}, provided the formal analysis of the pebble game of \cite{jacobs:hendrickson:PebbleGame:1997a} and introduced 
the idea of playing the game on a directed graph. Lee and Streinu \cite{pebblegame} 
generalized the pebble game to the entire range of parameters $0\le \ell\le 2k-1$, 
and left as an open problem using the pebble game to find 
sparsity certifying decompositions.  

\section{The pebble game with colors}
Our {\bf pebble game with colors} is a set of rules for 
constructing graphs indexed by nonnegative integers $k$ and $\ell$.  
We will use the pebble game with colors as the basis of an efficient 
algorithm for the decomposition problem later in this paper.
Since the 
phrase ``with colors'' is necessary only for comparison to \cite{pebblegame},
we will omit it in the rest of the paper when the context is clear.

We now present the pebble game with colors.
The game is played by a single player on a fixed finite set of
vertices. The player makes a finite sequence of moves;  a move
consists in the addition and/or orientation of an edge. At any
moment of time, the state of the game is captured by a directed 
graph $H$, with colored pebbles on vertices and edges.  
The edges of $H$ are colored by the pebbles on them.  
While playing the pebble game all edges are directed, and we use the 
notation $vw$ to indicate a directed edge from $v$ to $w$.

We describe the pebble game with colors in terms of its initial configuration
and the allowed moves.

\begin{figure}[htbp]
	\centering
	\subfigure[]{\includegraphics[width=0.45\textwidth]{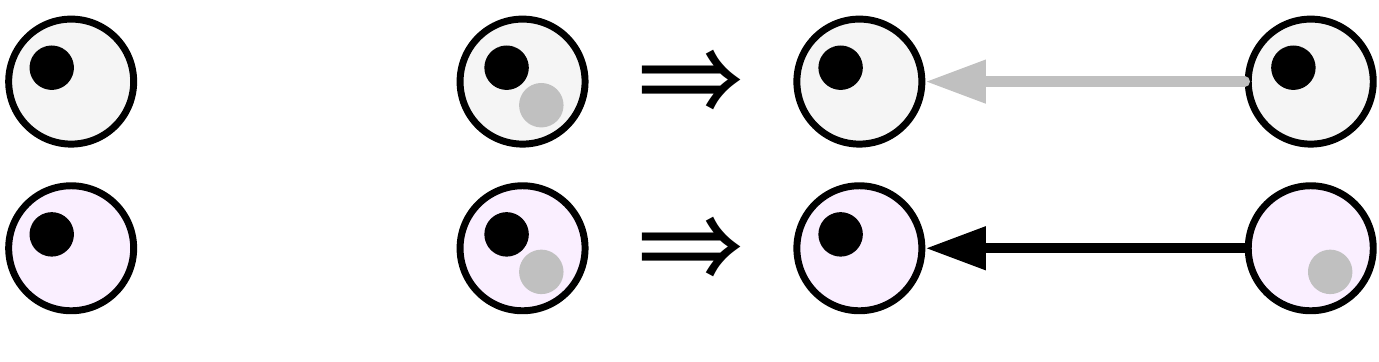}\label{fig.colored-add-edge}}
	\hspace{.3in}
	\subfigure[]{\includegraphics[width=0.45\textwidth]{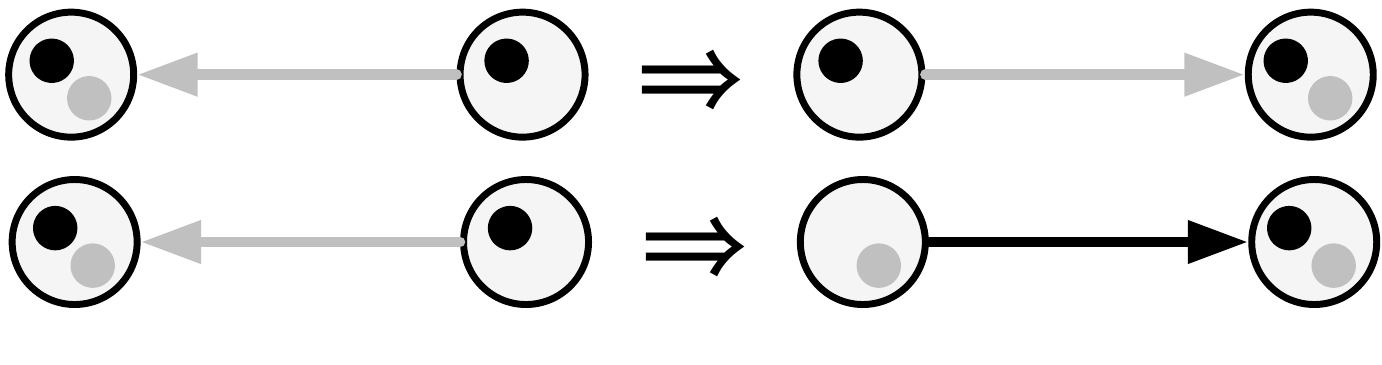}\label{fig.colored-pebble-slide}}
	\caption{Examples of pebble game with colors moves: (a) add-edge. (b) pebble-slide.
	Pebbles on vertices are shown as black or gray dots.  Edges are colored with the color
	of the pebble on them.}
\end{figure}

	\medskip
	{\bf Initialization:} In the beginning of the pebble game, $H$ has
	$n$ vertices and no edges.  We start by placing $k$ pebbles on each
	vertex of $H$, one of each color $c_i$, for $i=1,2,\ldots,k$.

	{\bf Add-edge-with-colors:} Let $v$ and $w$ be vertices with at least 
	$\ell+1$ pebbles on them. Assume (w.l.o.g.) that $v$ has at least one 
	pebble on it.
	Pick up a pebble from $v$, add the oriented edge $vw$ to $E(H)$ and put 
	the pebble picked up from $v$ on the new edge.

	\reffig{colored-add-edge} shows examples of the {\bf add-edge} move.

	{\bf Pebble-slide:} Let $w$ be a vertex with a pebble $p$ 
	on it, and let $vw$ be an edge in $H$. Replace $vw$ with $wv$ in $E(H)$; 
	put the pebble that was on  $vw$ on $v$;  and put $p$ on $wv$.
	
	Note that the color of an edge can change with a {\bf pebble-slide} move.
	 \reffig{colored-pebble-slide} shows examples.   The convention
	in these figures, and throughout this paper, is that pebbles on vertices
	are represented as colored dots, and that edges are shown in the color
	of the pebble on them.
	
	From the definition of the {\bf pebble-slide} move, it is easy to see that 
	a particular pebble is always either on the vertex where it started or on 
	an edge that has this vertex as the tail.  However, when making a sequence 
	of {\bf pebble-slide} moves that reverse the orientation of a path in $H$, 
	it is sometimes convenient to think of this path reversal sequence as 
	bringing a pebble from the end of the path to the beginning.

	\medskip
	The output of playing the pebble game is its complete configuration.

	{\bf Output:} At the end of the game, we obtain the directed graph $H$,
	along with the location and colors of the pebbles.  Observe that 
	since each edge has exactly one pebble on it, the pebble game configuration
	colors the edges.
	
	We say that the underlying undirected graph $G$ of $H$ is {\bf constructed}
	by the $(k,\ell)$-pebble game or that $H$ is a {\bf pebble-game graph}.

	Since each edge of $H$ has exactly one pebble on it, the pebble game's 
	configuration partitions the edges of $H$, and thus $G$, into $k$ different colors.  We call
	this decomposition of $H$ a {\bf pebble-game-with-colors decomposition}.  
	\reffig{k4-not-canonical-colors} 
	shows an example of a $(2,2)$-tight graph with a 
	pebble-game decomposition.  

	\begin{figure}[htbp]
		\centering
		\subfigure[]{
		\includegraphics[width=0.25\textwidth]{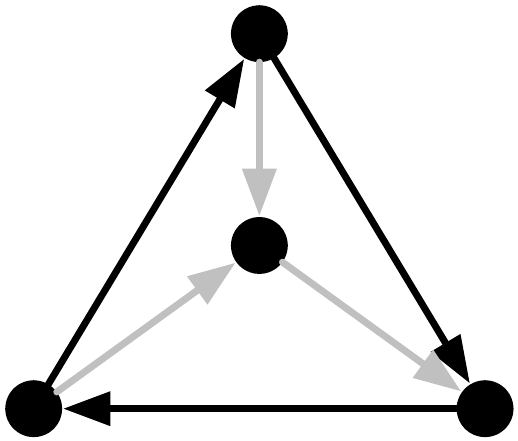}\label{fig.k4-not-canonical-colors}}
		\hspace{.3in}
		\subfigure[]{
		\includegraphics[width=0.25\textwidth]{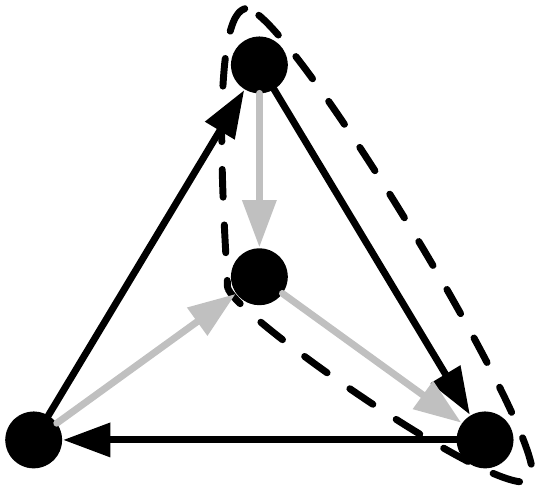}\label{fig.k4-not-canonical-colors-b}}
		\hspace{.3in}
		\subfigure[]{
		\includegraphics[width=0.25\textwidth]{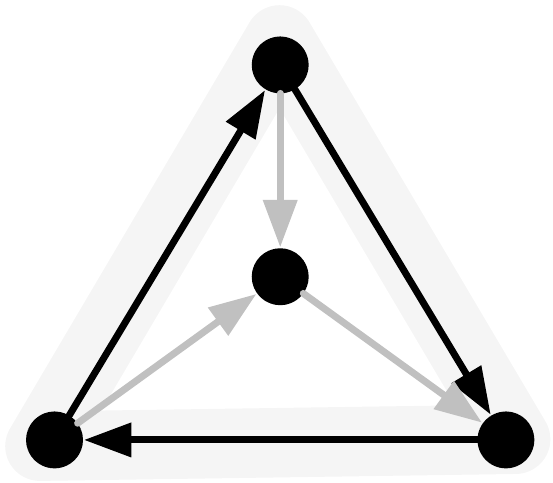}\label{fig.k4-not-canonical-colors-c}}
		\caption{A $(2,2)$-tight graph with one possible pebble-game decomposition.  The edges are oriented
		to show $(1,0)$-sparsity for each color.  (a) The graph $K_4$ with a pebble-game decomposition.
		There is an empty black tree at the center vertex and a gray spanning tree.  (b) The highlighted
		subgraph has two black trees and a gray tree; the black edges are part of a larger cycle but 
		contribute a tree to the subgraph.  (c) The highlighted subgraph (with a light gray background)
		has three empty gray trees;
		the black edges contain a cycle and do not contribute a piece of tree to the subgraph.}
	\end{figure}
	
	\medskip
	Let $G=(V,E)$ be pebble-game graph with the coloring induced by the 
	pebbles on the edges, and let $G'$ be a subgraph of $G$.  Then 
	the coloring of $G$ induces a set of monochromatic connected 
	subgraphs of $G'$ (there may be more than one of the same color).
	Such a monochromatic subgraph is called a \textbf{map-graph-piece}
	of $G'$ if it contains a cycle (in $G'$) and a \textbf{tree-piece}
	of $G'$ otherwise.  The \textbf{set of tree-pieces} of $G'$ 
	is the collection of tree-pieces induced by $G'$. As with the
	corresponding definition for $\ellteekay$s, the set of tree-pieces 
	is defined \emph{relative to a specific subgraph}; in particular 
	a tree-piece may be part of a larger cycle that includes edges 
	not spanned by $G'$.
	
	The properties of pebble-game decompositions are studied in Section \ref{pg-decomp},
	and \refthm{non-canonical-decomposition} shows that each color must be $(1,0)$-sparse.
	The orientation of the edges in \reffig{k4-not-canonical-colors} shows 
	this.
	
	For example \reffig{k4-not-canonical-colors} shows a $(2,2)$-tight graph with one 
	possible pebble-game decomposition.  The whole graph contains a gray tree-piece and a
	black tree-piece that is an isolated vertex.  The subgraph in \reffig{k4-not-canonical-colors-b}
	has a black tree and a gray tree, with the edges of the black tree coming from a cycle in the larger
	graph.  In \reffig{k4-not-canonical-colors-c}, however, the black cycle does not contribute a 
	tree-piece.  All three tree-pieces in this subgraph are single-vertex gray trees.

In the following discussion, we use the notation
$\peb(v)$ for the number of pebbles on $v$ and $\peb_i(v)$
to indicate the number of pebbles of colors $i$ on $v$.

Table \ref{tab.pebble-notations} lists the pebble game notation used in this paper.
\begin{table}
	\begin{tabular}
		{|l|l|} \hline {\bf Notation} & {\bf Meaning} \\
		\hline \hline
		$\grsp (V')$ & Number of edges spanned in $H$ by $V'\subset V$; i.e. $\card{E_{H}(V')}$\\
		\hline $\peb (V')$ &Number of pebbles on $V'\subset V$ \\
		\hline $\out (V')$ & Number of edges $vw$ in $H$ with $v\in V'$ and $w\in V-V'$ \\
		\hline $\peb_{i} (v)$ &Number of pebbles of color $c_{i}$ on $v\in V$ \\
		\hline $\out_{i} (v)$ & Number of edges $vw$ colored $c_i$ for $v\in V$ \\
		\hline
	\end{tabular}
	\caption{Pebble game notation used in this paper.} 
	\label{tab.pebble-notations} 
\end{table}

\section{Our Results} We describe our results in this section. 
The rest of the paper provides the proofs.

Our first result is a strengthening of the pebble games of \cite{pebblegame} 
to include colors.  It says that sparse graphs are exactly pebble game 
graphs.  Recall that from now on, all pebble games discussed in this 
paper are our pebble game with colors unless noted explicitly.
\begin{theorem}[{\bf Sparse graphs and pebble-game graphs coincide}]
\labelthm{sparse-graphs-are-pebble-graphs}
A graph $G$  is $(k,\ell)$-sparse  with $0\le\ell\le 2k-1$ if and only if $G$ is a pebble-game graph.
\end{theorem}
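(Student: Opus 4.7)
The plan is to prove both directions using the fundamental pebble-game invariant together with a reachability argument in the backward direction. Since the edge-coloring is only a bookkeeping device and does not enter either the add-edge condition or the sparsity count, the argument essentially parallels the colorless version from \cite{pebblegame}, and I would mostly have to check that the colored moves preserve all the relevant invariants.

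For the forward direction (pebble-game graphs are sparse), I would track the identity
\[
\peb(V') + \grsp(V') + \out(V') = k\card{V'}
\]
for every $V' \subseteq V$. This holds trivially at initialization, and a short case analysis shows that it is preserved by both the add-edge move (which turns a vertex pebble into an edge pebble) and the pebble-slide move (which turns a vertex pebble into an edge pebble while returning an edge pebble to a vertex, possibly flipping an edge across the boundary of $V'$). Given the invariant, an add-edge move for $vw$ requires $\peb(\{v,w\}) \ge \ell+1$, hence $\peb(V') \ge \ell+1$ for every $V' \supseteq \{v,w\}$; combined with the invariant this gives $\grsp(V') \le k\card{V'} - \ell - 1$ before the move and $\le k\card{V'} - \ell$ after. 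For sets $V'$ not containing both $v$ and $w$, $\grsp(V')$ is unaffected, and pebble-slides do not change $\grsp$ at all, so the sparsity bound is preserved throughout play.

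For the backward direction, given a $(k,\ell)$-sparse graph $G$, I would insert its edges into $H$ one at a time in any order and maintain the invariant that the underlying graph of $H$ equals the subgraph added so far. The crux is a pebble collection lemma: whenever the current pebble graph $H'$ plus a new edge $vw$ is still $(k,\ell)$-sparse, pebble-slides can produce a configuration with at least $\ell+1$ pebbles on $\{v,w\}$, after which the add-edge move is legal. For this, consider the reachable set $V^*$ of vertices reachable from $\{v,w\}$ by a directed path in $H'$. By construction $\out(V^*) = 0$, so the invariant reduces to $\peb(V^*) = k\card{V^*} - \grsp(V^*)$. Since $H' + vw$ is sparse and $v,w \in V^*$, we get $\grsp_{H'}(V^*) \le k\card{V^*} - \ell - 1$, whence $\peb(V^*) \ge \ell+1$. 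If fewer than $\ell+1$ pebbles currently lie on $\{v,w\}$, then some $u \in V^* \setminus \{v,w\}$ has a spare pebble; reversing the directed path from $\{v,w\}$ to $u$ by a sequence of pebble-slides transfers one pebble onto $\{v,w\}$. Iterating until the add-edge move becomes legal completes the step.

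The main obstacle is verifying that the path-reversal step really does increment the pebble count on $\{v,w\}$ and that repeated application preserves the invariants needed to reinvoke the lemma (in particular, that the recomputed $V^*$ after each reversal still satisfies $\out(V^*) = 0$ and the sparsity argument for $H' + vw$). This is essentially careful bookkeeping: one must ensure no pebble is lost to the outside of $V^*$ during reversal, and since every vertex on the directed path from $\{v,w\}$ to $u$ is by definition in $V^*$, the reversal keeps all relevant edges inside $V^*$ and the iteration terminates after at most $\card{V}$ slides per edge.
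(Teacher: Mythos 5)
Your proposal is correct and takes essentially the same route as the paper, which establishes Theorem~1 by way of invariants \textbf{(I1)}--\textbf{(I3)} (your conservation identity), the resulting sparsity of pebble-game graphs (\reflem{pebble-graphs-are-sparse}), and the $\ell+1$ pebble condition (\reflem{can-bring-another-pebble}) proved via the same reachability-set/path-reversal argument you sketch; the paper simply cites \cite{pebblegame} for these lemmas rather than reproving them, noting as you do that the coloring is pure bookkeeping. Your reconstruction of the backward direction (the set $V^*$ with $\out(V^*)=0$, forcing $\peb(V^*)\ge\ell+1$, then reversing a directed path to a free pebble) is precisely the Lee--Streinu argument the paper relies on.
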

	
Next we consider pebble-game decompositions, showing that they are a generalization of 
proper \ellteekay decompositions that extend to the entire matroidal range of sparse
graphs.

\begin{theorem}[{\bf The pebble-game-with-colors decomposition}] \labelthm{non-canonical-decomposition}
	A graph $G$ is a pebble-game graph if and only if it admits a decomposition into $k$	
	edge-disjoint subgraphs such that each  is $(1,0)$-sparse and every subgraph of $G$ 
	contains at least $\ell$  tree-pieces of the $(1,0)$-sparse graphs in the decomposition.
\end{theorem}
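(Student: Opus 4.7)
The plan is to prove both directions through a single counting identity, anchored on a pebble game invariant.

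First, I would establish the key invariant: at every stage of the pebble game, for every vertex $v$ and every color $i$,
\[
	\peb_i(v) + \out_i(v) = 1.
\]
This holds at initialization, when every vertex carries one pebble of each color and has no outgoing edges. It is preserved by \textbf{add-edge-with-colors}, since the pebble removed from $v$ (of some color $c$) is placed on the new out-edge from $v$, which then counts as an out-edge of color $c$. It is preserved by \textbf{pebble-slide} as well: reversing the edge $vw$ whose pebble has color $c'$, using a pebble $p$ of color $c$ from $w$, simultaneously (i) removes one out-edge of color $c'$ from $v$ and adds one pebble of color $c'$ to $v$, and (ii) removes one pebble of color $c$ from $w$ and adds one out-edge of color $c$ from $w$. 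Both sides of the invariant change by zero.

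For the forward direction, the invariant immediately implies that each color class, oriented as in the pebble game configuration, has out-degree at most $1$ at every vertex; hence each color class is $(1,0)$-sparse. For the tree-piece count, I would use the following Euler-type identity restricted to any vertex subset $V'$ with $n' = |V'|$. Because the color-$i$ subgraph restricted to $V'$ is $(1,0)$-sparse, each connected component is either a tree-piece or contains exactly one cycle (a map-graph-piece). Letting $t_i$ be the number of tree-pieces of color $i$ in $V'$ and $m'_i$ the number of color-$i$ edges spanned by $V'$, summing vertices against edges across components gives
\[
	m'_i = n' - t_i.
\]
Summing over the $k$ colors yields $\sum_{i=1}^{k} t_i = kn' - \grsp(V')$, and by \refthm{sparse-graphs-are-pebble-graphs} the pebble-game graph is $(k,\ell)$-sparse, so $\grsp(V') \le kn' - \ell$, which delivers $\sum_{i=1}^{k} t_i \ge \ell$, as required.

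For the reverse direction, suppose $G$ admits a decomposition of the stated form. For any $V' \subset V$, apply the identity $m'_i = n' - t_i$ color by color (this uses only that each color class is $(1,0)$-sparse, not the pebble game). Summing gives
\[
	\grsp(V') = \sum_{i=1}^{k} m'_i = kn' - \sum_{i=1}^{k} t_i \le kn' - \ell,
\]
by hypothesis. Hence $G$ is $(k,\ell)$-sparse and therefore, again by \refthm{sparse-graphs-are-pebble-graphs}, a pebble-game graph.

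The main obstacle I anticipate is the careful verification of the invariant under \textbf{pebble-slide}: the move can change an edge's color, so the bookkeeping has to track both the color of the pebble brought down from the edge and the color of the pebble pushed onto it, and confirm that on each endpoint, pebbles of each relevant color and out-edges of the matching color shift in lockstep. Once the invariant and the component-counting identity $m'_i = n' - t_i$ are in hand, both directions collapse to the same algebraic line.
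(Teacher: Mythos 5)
Your proof is correct, and the reverse direction follows the paper's argument essentially verbatim (the paper states the bound as ``a color $c_i$ with $t_i$ tree-pieces in a given subgraph can span at most $n-t_i$ edges,'' sums over colors, and invokes \refthm{sparse-graphs-are-pebble-graphs}; you observe, correctly, that for induced subgraphs this is in fact an equality). The forward direction is where you genuinely diverge. The paper proves the tree-piece bound via \reflem{subtrees}, a structural argument: by {\bf (I3)} every $V'$ has $\out(V')+\peb(V')\ge\ell$, and by {\bf (I4)} and {\bf (I5)} each pebble and each out-edge tail is the root of a monochromatic tree-piece, so the pebbles and out-edges directly exhibit the $\ell$ required tree-pieces. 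You instead bypass the root analysis entirely: you establish {\bf (I4)}, deduce $(1,0)$-sparsity of each color from it, and then obtain the tree-piece count by the same Euler identity $m'_i = n' - t_i$ used in the reverse direction, combined with the $(k,\ell)$-sparsity that \refthm{sparse-graphs-are-pebble-graphs} already guarantees. This is a clean and symmetric alternative, and it shows the two directions of \refthm{non-canonical-decomposition} are really the same counting inequality read in opposite senses. What you lose relative to the paper is the structural content of \reflem{subtrees} --- the identification of tree-piece roots with pebbles and out-edge tails via {\bf (I5)} --- which the paper reuses (through \reflem{roots}) in the proofs of \refthm{canonical-decomposition-I} and \refthm{canonical-decomposition-II}. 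For this theorem in isolation, however, your shortcut is sound.
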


The $(1,0)$-sparse subgraphs in the statement of \refthm{non-canonical-decomposition} 
are the colors of the pebbles; thus \refthm{non-canonical-decomposition}
gives a characterization of the pebble-game-with-colors decompositions 
obtained by playing the pebble game defined in the previous section.  
Notice the similarity between the 
requirement that the set of tree-pieces have size at least $\ell$ in 
\refthm{non-canonical-decomposition} and the definition of a proper $\ellteekay$.

Our next
results show that for {\it any} pebble-game graph, we can specialize its pebble game
construction to generate a decomposition that is a maps-and-trees or proper \ellteekay.
We call these specialized pebble game constructions {\bf canonical}, and using 
canonical pebble game constructions, we obtain new {\it direct} proofs of 
existing arboricity results.

We observe \refthm{non-canonical-decomposition}	that maps-and-trees 
 are special cases of the pebble-game decomposition: both spanning trees and 
spanning map-graphs are $(1,0)$-sparse, and each of the 
spanning trees contributes at least one piece of tree to every subgraph.

The case of proper \ellteekay graphs is more subtle; if each color in a 
pebble-game decomposition is a forest, then we have found a proper
\ellteekay, but this class is a subset of all possible proper \ellteekay
decompositions of a tight graph.  We show that this class of proper 
\ellteekay decompositions is sufficient to certify sparsity.

We now state the main theorem for the upper and lower range.

\begin{theorem}[{\bf Main Theorem (Lower Range): Maps-and-trees coincide with pebble-game graphs}]
	\labelthm{canonical-decomposition-I} Let $0\le \ell\le k$. A graph $G$ is a tight pebble-game graph if and only if $G$ is a $(k,\ell)$-maps-and-trees. 
\end{theorem}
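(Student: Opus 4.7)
The easy direction---that a $(k,\ell)$-maps-and-trees is a tight pebble-game graph---is immediate from \refthm{non-canonical-decomposition}. Such a graph has exactly $\ell(n-1)+(k-\ell)n = kn-\ell$ edges and already comes equipped with a decomposition into $k$ edge-disjoint $(1,0)$-sparse subgraphs: spanning trees are $(1,1)$-tight and hence $(1,0)$-sparse, while spanning map-graphs are $(1,0)$-tight. For every $V'\subseteq V$, the restriction of each of the $\ell$ spanning trees to $V'$ is a non-empty forest supplying at least one tree-piece, so there are at least $\ell$ tree-pieces in the restriction. Thus \refthm{non-canonical-decomposition} applies, $G$ is a pebble-game graph, and the edge count gives tightness.

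For the hard direction, my plan is to start from the decomposition promised by \refthm{non-canonical-decomposition}: $G$ decomposes into edge-disjoint $(1,0)$-sparse subgraphs $G_1,\dots,G_k$. Writing $t_i$ for the number of tree components of $G_i$ (as a subgraph on vertex set $V$, counting isolated vertices), the pseudoforest edge formula gives $|E(G_i)| = n - t_i$, so the tightness of $G$ forces $\sum_i t_i = \ell$. The goal is then to refine this decomposition into a $(k,\ell)$-maps-and-trees: $\ell$ colors should be spanning trees ($t_i = 1$ with no cyclic component) and the remaining $k-\ell$ should be spanning map-graphs ($t_i = 0$).

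I would carry out the refinement by an exchange argument on the coloring, realizable as a sequence of pebble-slide moves. If some color $i$ has two tree components $T_1,T_2$ lying in a common connected piece of $G$, then any path in $G$ joining them crosses an edge $e$ whose recoloring to color $i$ merges $T_1$ and $T_2$ without creating a cycle in $G_i$. The edge $e$ was previously in some $G_j$; if losing $e$ creates a new tree component in $G_j$, I iterate on $G_j$. Symmetrically, when a color $i$ contains both a tree component and a cyclic component, a suitable swap moves the tree-piece into a currently-cyclic color. The invariant $\sum_i t_i = \ell$ from \refthm{non-canonical-decomposition} is preserved throughout, while a potential function counting pairs of misplaced tree components strictly decreases, so the process terminates at the desired maps-and-trees.

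The main obstacle is certifying that every intermediate step is realizable as valid pebble-game moves and that the exchange never gets blocked. Blockage could in principle occur when all candidate swap edges would create cycles in the target color, but the sparsity lower bound of \refthm{non-canonical-decomposition}, together with a careful choice of the joining path, rules this out. When $G$ is disconnected, the argument is applied componentwise using the induced sparsity parameters on each connected component.
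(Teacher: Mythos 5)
The forward direction is fine and coincides with the paper's argument: spanning trees and spanning map-graphs are $(1,0)$-sparse, each spanning tree contributes at least one tree-piece to every non-empty induced subgraph, so \refthm{non-canonical-decomposition} applies, and the edge count $\ell(n-1)+(k-\ell)n=kn-\ell$ gives tightness.

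For the reverse direction your plan diverges genuinely from the paper. The paper never takes an arbitrary pebble-game decomposition and repairs it; instead it proves (\reflem{can-kill-m1-moves}, \reflem{kill-m2-moves-locally}, \reflem{canonical-constructions-exist}) that every pebble-game graph admits a \emph{canonical} construction in which monochromatic cycles are never created gratuitously, and then reads the maps-and-trees decomposition off the final pebble configuration: exactly one pebble of each of colors $c_1,\dots,c_\ell$ remains, and by \reflem{roots} each roots a tree with $n-1$ edges. Your route is instead an exchange/recoloring argument on an existing decomposition, closer in spirit to the classical matroid-union proofs of Nash-Williams and Tutte. That is a legitimate alternative strategy. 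However, the step you yourself flag as the ``main obstacle'' is in fact the entire content of the hard direction, and it is not actually proved. The claim that the sparsity lower bound from \refthm{non-canonical-decomposition} ``together with a careful choice of the joining path'' rules out blockage is an assertion, not an argument; this is precisely what the paper's shortcut construction (\reflem{kill-m2-moves-locally}, illustrated in \reffig{m2-meta-picture}) is built to prove, and its proof is a nontrivial induction. Similarly, the termination argument via a ``potential function counting pairs of misplaced tree components'' is not defined precisely enough to check: when you move an edge $e$ out of $G_j$, you may split a component of $G_j$, create a new tree component there, and then need to borrow back an edge, possibly from $G_i$, so it is not clear the stated potential decreases monotonically. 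Finally, note that $t_i=1$ alone does not make $G_i$ a spanning tree (it could still have cyclic components plus one smaller tree component); you do state the correct target ``$t_i=1$ with no cyclic component,'' but the mechanism for expelling cyclic components from the designated tree colors is again only gestured at. As written, the proposal identifies a plausible alternative proof shape but leaves the essential lemma unproved.
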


\begin{theorem}[{\bf Main Theorem (Upper Range): Proper \ellteekay graphs coincide with pebble-game graphs}]
	Let $k\le \ell\le 2k-1$. A graph $G$ is a tight pebble-game graph if and only if it is a proper \ellteekay with $kn-\ell$ edges. \labelthm{canonical-decomposition-II} 
\end{theorem}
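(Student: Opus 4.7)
The plan is to derive both directions from \refthm{non-canonical-decomposition}, which already provides a pebble-game-with-colors decomposition into $k$ edge-disjoint $(1,0)$-sparse color classes with at least $\ell$ tree-pieces in every subgraph. The ($\Leftarrow$) direction is a direct counting exercise, while ($\Rightarrow$) requires refining such a decomposition so that each color class is a forest, which is the technical crux.

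For the ($\Leftarrow$) direction, suppose $G$ admits a proper \ellteekay decomposition with $kn-\ell$ edges. For a subgraph $G'$ on $n'$ vertices with $m'$ edges, restrict each of the $\ell$ trees to $G'$ to obtain a collection of tree-pieces. Every vertex of $G'$ lies in exactly $k$ of the original trees, so the total vertex-tree-piece incidences in $G'$ is $kn'$; since each tree-piece on $s$ vertices contributes $s-1$ edges and these pieces partition $E(G')$, we obtain $m' = kn'-t$, where $t$ is the total number of tree-pieces in $G'$. Properness gives $t\ge \ell$, so $m'\le kn'-\ell$; combined with $|E(G)|=kn-\ell$, this shows $G$ is $(k,\ell)$-tight, and \refthm{sparse-graphs-are-pebble-graphs} concludes that $G$ is a tight pebble-game graph.

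For the ($\Rightarrow$) direction, let $G$ be a tight pebble-game graph and apply \refthm{non-canonical-decomposition}. The pebble-game invariant $\peb_i(v)+\out_i(v)=1$ makes each color class an oriented pseudoforest and ensures that a color class is acyclic iff every one of its connected components carries a pebble of that color. My plan is to show that one may always produce a \emph{canonical} construction in which every color class is a forest. Granted this, the connected components of the $k$ color-forests are genuine trees; their total number equals $kn-|E(G)|=\ell$; each vertex belongs to exactly one component per color and hence to exactly $k$ of the $\ell$ trees; and because each color is globally acyclic, the tree-pieces of any subgraph $G'$ coincide with the monochromatic components of $G'$, so the $\ge \ell$ tree-pieces bound given by \refthm{non-canonical-decomposition} translates directly into properness of the resulting \ellteekay.

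The main obstacle is proving existence of a canonical construction, which I would address by an iterative color-exchange argument. Suppose color $i$ contains a cycle $C$; each $v\in C$ has $\peb_i(v)=0$ and $\out_i(v)=1$, hence $k-1$ outgoing edges of colors other than $i$. The set $R$ of vertices reachable from $C$ via directed paths in $H$ is out-closed, so summing the invariant yields $\peb(R)+\grsp(R)=k|R|$; together with $(k,\ell)$-sparsity this forces $\peb(R)\ge \ell\ge k$, placing pebbles within reach of $C$. Using a carefully chosen directed path from some $v_0\in C$ whose first edge has color $j\ne i$, a sequence of pebble-slides transports a pebble of color $j$ to $v_0$, and a final pebble-slide across an incoming cycle edge at $v_0$ reverses that edge and recolors it to $j$, breaking the color-$i$ cycle. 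Two delicate points remain: verifying that a suitable path always exists in the upper range (including when all pebbles on $R$ happen to be of color $i$, where one must first shuffle colors along an auxiliary path before extracting the needed one), and exhibiting a monovariant that guarantees termination of the exchange process without creating new cycles in color $j$.
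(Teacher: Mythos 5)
Your ($\Leftarrow$) direction is essentially the paper's: a proper \ellteekay on $n'$ vertices contributing $t\ge\ell$ tree-pieces to a subgraph forces $m'\le kn'-\ell$, so the graph is tight, and \refthm{sparse-graphs-are-pebble-graphs} finishes. This is the same counting the paper records as a remark before the theorem.

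Your ($\Rightarrow$) direction, however, departs from the paper's route and contains a genuine gap that you yourself flag. The paper does \emph{not} take an arbitrary pebble-game construction and repair monochromatic cycles after the fact. It instead proves (\reflem{can-kill-m1-moves}, \reflem{kill-m2-moves-locally}, \reflem{canonical-constructions-exist}) that the construction itself can always be carried out \emph{canonically}: the \textbf{canonical add-edge} rule (cover the new edge with a color already present on both endpoints) makes type-(\textbf{M1}) cycle creation impossible in the upper range, because with $\ell+1\ge k+1$ pebbles on two vertices and only $k$ colors some color repeats; and the \textbf{shortcut construction} of \reflem{kill-m2-moves-locally} shows, by induction on the reorientation path, that any sequence of pebble-slides leading to an add-edge can be replaced by one with no type-(\textbf{M2}) step. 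Cycles are therefore never created, so there is nothing to undo and no termination question arises. Once a canonical construction exists, \refthm{non-canonical-decomposition} gives the $\ge\ell$ tree-pieces condition, the forest structure gives $\ell$ edge-disjoint trees (since $kn-(kn-\ell)=\ell$ components), and invariant \textbf{(I4)} gives that every vertex lies in exactly $k$ of them.

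By contrast, your repair strategy --- find a monochromatic cycle $C$ in color $i$, route a pebble of some other color $j$ to a vertex $v_0\in C$, and recolor an incoming cycle edge to $j$ --- is not completed. You correctly observe via \textbf{(I3)} that the out-closure $R$ of $C$ carries $\ge\ell\ge k$ pebbles, but you do not establish (a) that a pebble of a usable color can always be routed to $v_0$ \emph{without itself traversing an (M2) move}, nor (b) a monovariant that bounds the process, since the recoloring to $j$ can close a cycle in color $j$ and the number of monochromatic cycles need not decrease monotonically. These are exactly the difficulties the paper's online/canonical formulation is designed to sidestep. As written, the key step ``one may always produce a canonical construction in which every color class is a forest'' remains an unproved assertion, so the ($\Rightarrow$) direction is incomplete.
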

	
As corollaries, we obtain the existing decomposition results for sparse graphs.
\begin{corollary}
	[\textbf{Nash-Williams \cite{nash-williams:decomposition-into-forests:1964}, Tutte \cite{tutte:decomposing-graph-in-factors-1961}, White and Whiteley \cite{whiteley:union-matroids}}] 
		\labelcor{m-a-t-equals-tight}
	Let $\ell\le k$. A graph $G$ is tight if and only if has a $(k,\ell)$-maps-and-trees decomposition. 
\end{corollary}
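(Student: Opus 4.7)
The plan is to derive this corollary by chaining the two main theorems already stated in the excerpt. First, I would unwind the definition of ``tight'': a graph $G$ on $n$ vertices is tight exactly when it is $(k,\ell)$-sparse and has $kn-\ell$ edges. Applying \refthm{sparse-graphs-are-pebble-graphs}, being $(k,\ell)$-sparse is equivalent to being a pebble-game graph, so the tight graphs coincide with those pebble-game graphs having exactly $kn-\ell$ edges, i.e., with the tight pebble-game graphs. This step is purely a matter of matching definitions.

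Second, since the hypothesis restricts to $\ell \le k$ (with $\ell \ge 0$ implicit from the standing assumption $0 \le \ell \le 2k-1$), we are in the lower range handled by \refthm{canonical-decomposition-I}. That theorem asserts that, in this range, tight pebble-game graphs coincide with $(k,\ell)$-maps-and-trees. Composing the two equivalences yields the corollary in both directions: a tight graph is a tight pebble-game graph, hence admits a $(k,\ell)$-maps-and-trees decomposition; and conversely a $(k,\ell)$-maps-and-trees has $\ell$ spanning trees (contributing $\ell(n-1)$ edges) together with $k-\ell$ spanning map-graphs (contributing $(k-\ell)n$ edges), for a total of $kn-\ell$ edges, and is a tight pebble-game graph by \refthm{canonical-decomposition-I}, hence tight.

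There is no genuine obstacle here — all substantive work has been absorbed into \refthm{sparse-graphs-are-pebble-graphs} and \refthm{canonical-decomposition-I}. The only thing worth noting is that the constraint $\ell \le k$ is precisely what makes the decomposition syntactically meaningful, since we need $k-\ell \ge 0$ map-graphs; this matches exactly the range in which \refthm{canonical-decomposition-I} is stated, so the two hypotheses line up without any extra case analysis.
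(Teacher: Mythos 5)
Your proof is correct and matches the paper's approach: the paper states the corollary immediately after \refthm{canonical-decomposition-I} precisely because it is the composition of \refthm{sparse-graphs-are-pebble-graphs} (sparse $\Leftrightarrow$ pebble-game graph) with \refthm{canonical-decomposition-I} (tight pebble-game graph $\Leftrightarrow$ maps-and-trees), exactly as you lay out. Your edge-count check and the observation about the $\ell\le k$ range are accurate but not strictly necessary, since both are already absorbed into the cited theorems.
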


\begin{corollary}
	[\textbf{Crapo \cite{crapo:rigidity:88}, Haas \cite{haas:2002}}] 
	\labelcor{t-a-t-equals-tight}
	Let $k\le \ell\le 2k-1$. A graph $G$ is tight if and only if it is a proper \ellteekay. 
\end{corollary}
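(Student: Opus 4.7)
The plan is to derive this corollary directly by chaining the two main theorems already at our disposal. First, by definition, a tight graph on $n$ vertices is a $(k,\ell)$-sparse graph with exactly $kn-\ell$ edges. Theorem~\ref{thm.sparse-graphs-are-pebble-graphs} says that in the range $0\le\ell\le2k-1$ the class of sparse graphs coincides with the class of pebble-game graphs; specialising to $kn-\ell$ edges, the tight graphs coincide with the tight pebble-game graphs. Since we are in the upper range $k\le\ell\le 2k-1$, Theorem~\ref{thm.canonical-decomposition-II} then identifies the tight pebble-game graphs with the proper $\ellteekay$ graphs having $kn-\ell$ edges, which yields both implications of the corollary.

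The only thing left to clean up is the discrepancy between the statement of Theorem~\ref{thm.canonical-decomposition-II}, which carries the explicit edge-count $kn-\ell$, and the statement of the corollary, which does not. This is handled by the one-line observation that the edge-count is automatic for any $\ellteekay$: if the $\ell$ trees of the decomposition span $n_1,\ldots,n_\ell$ vertices respectively, then the number of edges equals $\sum_{j=1}^\ell (n_j-1) = \bigl(\sum_j n_j\bigr) - \ell = kn-\ell$, since the condition that each vertex lies in exactly $k$ of the trees gives $\sum_j n_j = kn$. Thus no real work remains; the substance of the corollary is already absorbed into Theorems~\ref{thm.sparse-graphs-are-pebble-graphs} and \ref{thm.canonical-decomposition-II}, and no step here should pose any obstacle.
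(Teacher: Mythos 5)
Your proposal is correct and matches the approach the paper intends: the corollary is stated without proof immediately after \refthm{canonical-decomposition-II}, precisely because it follows by chaining \refthm{sparse-graphs-are-pebble-graphs} with \refthm{canonical-decomposition-II} exactly as you do. The only nontrivial point—that the edge count $kn-\ell$ is automatic for any \ellteekay, via $\sum_j (n_j-1)=kn-\ell$—is left implicit in the paper, and you fill it in correctly.
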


\paragraph{Efficiently finding canonical pebble game constructions.}
The proofs of \refthm{canonical-decomposition-I} and \refthm{canonical-decomposition-II}
lead to an obvious algorithm with $O(n^3)$ running time for the {\bf decomposition}
problem.  Our last result improves on this, showing that a canonical pebble game 
construction, and thus a maps-and-trees or proper \ellteekay decomposition can be 
found using a pebble game algorithm in $O(n^2)$ time and space.

These time and space bounds mean that our algorithm can be 
combined with those of \cite{pebblegame} without any change in complexity.

\section{Pebble game  graphs} 
In this section we prove \refthm{sparse-graphs-are-pebble-graphs},
 a strengthening of results from \cite{pebblegame} to the pebble game with colors.
Since many of the relevant properties of the pebble game with colors carry over
directly from the pebble games of \cite{pebblegame}, we refer the reader there for the proofs.

We begin by establishing some invariants that hold during the execution of the pebble game.
	\begin{lemma}[{\bf Pebble game invariants}]
		During the execution of the pebble game, the following invariants are maintained in $H$: 
		\begin{enumerate}
			\item[{\bf (I1)}] There are at least $\ell$ pebbles on $V$.  \cite{pebblegame}
			\item[{\bf (I2)}] For each vertex $v$, $\grsp (v) + \out (v) + \peb (v)=k$.  \cite{pebblegame}
			\item[{\bf (I3)}] For each $V'\subset V$, $\grsp (V')+\out (V')+\peb (V')=kn'$. \cite{pebblegame}
			\item[{\bf (I4)}] For every vertex $v\in V$, $\out_i (v)+\peb_i (v)=1$. 
			\item[{\bf (I5)}] Every maximal path consisting only of edges with color $c_i$ ends in either the first vertex with a pebble of color $c_i$ or a cycle. 
		\end{enumerate}
		\labellem{pebble-game-invariants} 
\end{lemma}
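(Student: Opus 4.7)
The plan is to prove all five invariants by induction on the number of pebble game moves, noting that (I1)--(I3) are already established in~\cite{pebblegame}, so the real work lies in (I4) and (I5). First I would verify the base case: the initial configuration has no edges and one pebble of each color on every vertex, so $\peb_i(v)=1$ and $\out_i(v)=0$ for every $v$ and $i$, giving (I4); (I5) holds vacuously since there are no color-$c_i$ edges.

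For the inductive step I would examine the two legal moves and check that (I4) is preserved at each vertex they touch. For \textbf{add-edge-with-colors}, a pebble of some color $c_i$ is lifted from $v$ (so $\peb_i(v)$ drops from $1$ to $0$) and placed on the new directed edge $vw$ (so $\out_i(v)$ rises from $0$ to $1$); the color-$c_i$ sum at $v$ is unchanged, and no counter at $w$ or any other vertex changes. For \textbf{pebble-slide} on an edge $vw$ whose pebble has color $c_j$, using a pebble of color $c_i$ sitting on $w$, I would track the two colors separately: at $v$, $\out_j(v)$ decreases by $1$ while $\peb_j(v)$ increases by $1$; at $w$, $\peb_i(w)$ decreases by $1$ while $\out_i(w)$ increases by $1$, because the reversed edge $wv$ now carries the color-$c_i$ pebble. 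Every other $(v',i')$ pair is untouched, so (I4) survives.

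For (I5), the key observation is that (I4) forces $\out_i(v)\le 1$ at every vertex, so the subgraph consisting of color-$c_i$ edges is a partial functional graph. Starting from any vertex and repeatedly following the unique outgoing color-$c_i$ edge, the walk either reaches a vertex $w$ with $\out_i(w)=0$ (which, by (I4), must carry a pebble of color $c_i$, and is plainly the \emph{first} such vertex on the walk since every interior vertex has $\out_i=1$ and hence $\peb_i=0$), or it revisits a previously seen vertex, producing a cycle. This gives exactly the dichotomy stated in (I5).

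The main obstacle, though minor, is the bookkeeping in the pebble-slide case when the two colors $c_i$ and $c_j$ happen to coincide, or when checking that the pebble picked up in an add-edge move is really the only $c_i$-pebble at $v$; both are resolved by appealing to (I4) \emph{before} the move, which guarantees $\out_i(v)=0$ whenever $\peb_i(v)=1$, ruling out any double-counting. Once (I4) is secured the structural argument for (I5) is immediate.
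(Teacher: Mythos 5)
Your proof is correct and follows essentially the same route as the paper: the paper likewise cites \cite{pebblegame} for \textbf{(I1)}--\textbf{(I3)}, verifies \textbf{(I4)} by checking that the initialization and both moves preserve the color-wise identity, and deduces \textbf{(I5)} from \textbf{(I4)} by the observation that the unique outgoing $c_i$-edge at each vertex forces a monochromatic path to terminate at a $c_i$-pebble or close into a cycle. The only difference is that you spell out the bookkeeping for the two moves (including the $c_i=c_j$ case) which the paper dismisses as ``clear from inspection.''
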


	\begin{proof}
		{\bf (I1)}, 
		{\bf (I2)}, 
		and {\bf (I3)} come directly from \cite{pebblegame}.
		
		{\bf (I4)} This invariant clearly holds at the initialization phase of the pebble game with colors. That {\bf add-edge} and {\bf pebble-slide} moves preserve {\bf (I4)} is clear from inspection. 
		
		{\bf (I5)} By {\bf (I4)}, a monochromatic path of edges is forced to end only at a vertex with a pebble of the same color on it. If there is no pebble of that color reachable, then the path must eventually visit some vertex twice. 
	 \end{proof}
	
	From these invariants, we can show that the pebble game constructible graphs are sparse.
\begin{lemma}[{\bf Pebble-game graphs are sparse \cite{pebblegame}}]\labellem{pebble-graphs-are-sparse}
		Let $H$ be a graph constructed with the pebble game. Then $H$ is sparse. 
		If there are exactly $\ell$ pebbles on $V(H)$, then $H$ is tight.  
\end{lemma}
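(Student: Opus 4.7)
The plan is to derive both conclusions directly from invariant (I3). Rearranging (I3) gives $\grsp(V') = kn' - \out(V') - \peb(V')$ for every $V' \subseteq V$, so sparsity is equivalent to the bound $\out(V') + \peb(V') \ge \ell$ on every nonempty $V'$ that spans at least one edge. Moreover, if $\peb(V) = \ell$ exactly, then (I3) applied to $V' = V$, together with $\out(V) = 0$, immediately gives $\grsp(V) = kn - \ell$, which combined with sparsity is the tightness conclusion.

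The only remaining task is to establish the inequality $\out(V') + \peb(V') \ge \ell$, which I would prove by induction on the length of the move sequence producing $H$. In the base configuration no edges are present and each vertex carries $k$ pebbles of each of the $k$ colors; either the base bound holds directly or the inequality applies only to $V'$ spanning an edge, so the vacuous case is fine. For the inductive step, one does a direct case analysis on how the endpoints of the edge involved in the current move sit relative to $V'$: a \textbf{pebble-slide} preserves $\out(V') + \peb(V')$ exactly in all three cases (both endpoints in $V'$, both outside, or one of each), while an \textbf{add-edge} can only decrease this quantity when both endpoints $v,w$ lie in $V'$, since in the other cases either the new edge becomes an out-edge of $V'$ or it does not affect $V'$ at all. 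In the internal case the add-edge precondition, namely that $v$ and $w$ together carry at least $\ell + 1$ pebbles before the move, guarantees that $V' \supseteq \{v,w\}$ still has at least $\ell$ pebbles after the single pebble is consumed onto the new edge.

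The main obstacle, such as it is, is simply the bookkeeping in the case analysis above. However, neither the add-edge precondition nor the pebble-slide mechanics refer to the color of any pebble (they are conditions on the total pebble count), so the argument is word-for-word the same as the sparsity proof already given for the uncolored pebble game in \cite{pebblegame}. I would therefore present the derivation from (I3) explicitly and then invoke \cite{pebblegame} for the inductive inequality.
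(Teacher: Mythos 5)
Your proposal is correct, and it matches the paper's treatment: the paper explicitly defers this lemma to Lee and Streinu's earlier work (``we refer the reader there for the proofs''), and your plan to state the reduction to invariant (I3) and then invoke \cite{pebblegame} for the inductive inequality $\out(V')+\peb(V')\ge\ell$ is exactly what the paper intends. The case analysis you sketch (pebble-slide preserves the sum in all four placements of the two endpoints; add-edge is neutral except when both endpoints lie in $V'$, where the $\ell+1$ precondition directly yields $\peb(V')\ge\ell$ after the move) is the standard argument, and your restriction of the inequality to vertex sets that span at least one edge correctly handles the case $\ell>k$, where the bound fails for isolated vertices in the initial configuration. One small slip: you write that initially ``each vertex carries $k$ pebbles of each of the $k$ colors''; it should be one pebble of each of the $k$ colors, for a total of $k$ pebbles per vertex.
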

	
The main step in proving that every sparse graph is a pebble-game graph is the following.  Recall
that by bringing a pebble to $v$ we mean reorienting $H$ with {\bf pebble-slide} moves to reduce the 
out degree of $v$ by one.
\begin{lemma}[{\bf The $\ell+1$ pebble condition} \cite{pebblegame}]\labellem{can-bring-another-pebble}
Let $vw$ be an edge such that $H+vw$ is sparse. If $\peb (\{v,w\})<\ell+1$, 
then a pebble not on $\{v,w\}$ can be brought to either $v$ or $w$.  
\end{lemma}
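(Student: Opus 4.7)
The plan is to argue by contradiction that some vertex outside $\{v,w\}$ carrying a pebble is reachable from $v$ or $w$ via a directed path in $H$; once such a path is exhibited, reversing it with a sequence of \textbf{pebble-slide} moves transports the pebble to $v$ or $w$ without disturbing the pebbles already on $\{v,w\}$.

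Concretely, first I would define $V'$ to be the set of vertices reachable from $\{v,w\}$ by directed paths in $H$ (with $v,w\in V'$ trivially). If some vertex $u\in V'\setminus\{v,w\}$ satisfies $\peb(u)\ge 1$, choose any directed path from $v$ or $w$ to $u$ and apply \textbf{pebble-slide} moves along it in order; this is exactly the ``path-reversal'' sequence mentioned after the definition of pebble-slide, and it delivers one of $u$'s pebbles to $\{v,w\}$ while leaving the current pebbles on $v$ and $w$ in place. So the only case that needs work is when every vertex of $V'\setminus\{v,w\}$ carries no pebble, i.e. $\peb(V')=\peb(\{v,w\})\le\ell$.

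Next I would derive a contradiction from sparsity of $H+vw$ in this remaining case. By construction of $V'$ as the set of vertices reachable from $\{v,w\}$, no edge leaves $V'$ in $H$, so $\out(V')=0$. Invariant \textbf{(I3)} from \reflem{pebble-game-invariants} then gives
\[
\grsp(V') \;=\; kn' - \out(V') - \peb(V') \;\ge\; kn' - \ell,
\]
where $n'=|V'|$. But $v,w\in V'$, so the edge $vw$ is spanned by $V'$ in $H+vw$, and the number of edges spanned by $V'$ in $H+vw$ is at least $kn'-\ell+1$, violating the $(k,\ell)$-sparsity hypothesis on $H+vw$. This contradiction shows that the ``otherwise'' case cannot arise, and the lemma follows.

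The only delicate point, and the one I would write out most carefully, is the interaction between \textbf{pebble-slide} moves along the reversing path and the pebbles that already sit on $\{v,w\}$: one has to observe that reversing a directed path terminating at $u$ moves exactly the pebble that was on $u$ to the start of the path, and each intermediate edge simply swaps the pebble it carried with a pebble on its head vertex, so pebbles originally residing on $v$ and $w$ (as opposed to on edges out of them) are untouched. Once this is noted, bringing the new pebble to $v$ or $w$ is immediate, and the $\peb(\{v,w\})$ count strictly increases by one via a pebble not originally on $\{v,w\}$.
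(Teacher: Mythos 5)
Your argument is correct and is essentially the standard reachability-plus-invariant argument from \cite{pebblegame}, which the paper cites for this lemma rather than reproving: take the set $V'$ of vertices reachable from $\{v,w\}$, note $\out(V')=0$, and apply {\bf (I3)} to force a sparsity violation in $H+vw$ if no reachable outside pebble exists. One cosmetic slip in your final paragraph: when the directed path from $v$ to a pebbled vertex $u$ is reversed, the pebble that lands on $v$ is the one that was covering the first edge of the path, while $u$'s pebble ends up on the last-reversed edge near $u$; the net change to vertex pebble counts is exactly as you state, so this does not affect the proof.
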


It  follows that any sparse graph has a pebble game construction. 
\begin{restate}{sparse-graphs-are-pebble-graphs}[{\bf Sparse graphs and pebble-game graphs coincide}]
A graph $G$  is $(k,\ell)$-sparse  with $0\le\ell\le 2k-1$ if and only if $G$ is a pebble-game graph.
\end{restate}
	
\section{The pebble-game-with-colors decomposition}\label{pg-decomp}
In this section we prove \refthm{non-canonical-decomposition}, which characterizes
all pebble-game decompositions.
We start with the following lemmas about the structure of monochromatic connected components in $H$,
the directed graph maintained during the pebble game.
	
\begin{lemma}[{\bf Monochromatic pebble game subgraphs are $(1,0)$-sparse}]\labellem{each-color-is-map-sparse}
	Let $H_i$ be the subgraph of $H$ induced by edges with pebbles of color $c_i$ on them. 
	Then $H_i$ is $(1,0)$-sparse, for $i=1,\ldots,k$.  
\end{lemma}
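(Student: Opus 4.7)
The plan is to derive the $(1,0)$-sparsity of $H_i$ as a direct consequence of invariant \textbf{(I4)} of \reflem{pebble-game-invariants}. The key observation is that (I4) says $\out_i(v) + \peb_i(v) = 1$ for every vertex $v$, so in particular $\out_i(v) \le 1$ for every $v$. That is, in the orientation of $H$ maintained by the pebble game, the monochromatic subgraph $H_i$ has the property that every vertex has out-degree at most one. Such a graph is a pseudoforest, which is a textbook example of a $(1,0)$-sparse graph; the content of the lemma is just to make this bookkeeping precise with respect to the color classes defined by the pebble game.

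To spell this out, first I would fix an arbitrary non-empty $V' \subseteq V$ with $|V'| = n'$ and let $m'_i = \grsp_{H_i}(V')$ denote the number of color-$c_i$ edges spanned by $V'$ in $H$. Each such spanned edge $vw$ is oriented from some tail $v \in V'$ to some head $w \in V'$ and is, by definition of $H_i$, an edge counted in $\out_i(v)$ (before restricting to $V'$). Therefore
\[
   m'_i \;=\; \sum_{v \in V'} \bigl|\{\,vw \in E(H_i) : w \in V'\,\}\bigr| \;\le\; \sum_{v \in V'} \out_i(v) \;\le\; \sum_{v \in V'} 1 \;=\; n',
\]
where the second inequality is exactly \textbf{(I4)}. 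This is the defining inequality for $(1,0)$-sparsity.

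There really is no hard step here; the difficulty, if any, was front-loaded into proving invariant \textbf{(I4)} itself in \reflem{pebble-game-invariants}, which in turn was a direct inspection of how \textbf{add-edge} and \textbf{pebble-slide} interact with the color attached to each pebble. The only thing to be careful about is verifying that the edges counted in $\grsp_{H_i}(V')$ really do inject into $\bigcup_{v \in V'} \out_i(v)$, i.e.\ that an edge spanned by $V'$ in $H_i$ is charged to a tail lying in $V'$; this follows because both endpoints of a spanned edge are in $V'$, so in particular the tail is. I would then remark that \textbf{(I5)} (maximal monochromatic paths terminate at a pebble of that color or at a cycle) is not needed for this lemma but will be used later, when analyzing tree-pieces versus map-graph-pieces within each color class.
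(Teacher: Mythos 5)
Your proof is correct and uses the same key idea as the paper: invariant \textbf{(I4)} gives $\out_i(v)\le 1$ for every vertex, so $H_i$ has an orientation with out-degree at most one, which immediately implies $(1,0)$-sparsity. The paper states this in one sentence; you simply spell out the counting argument that converts the out-degree bound into the subgraph-edge inequality, which is a fine and more explicit way to present the same proof.
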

\begin{proof}
	By {\bf (I4)} $H_i$ is a set of edges with out degree at most one for every vertex. 
	 
\end{proof}

\begin{lemma}[{\bf Tree-pieces in a pebble-game graph}]
\labellem{subtrees}
Every subgraph of the directed graph $H$ in a pebble 
game construction contains at least $\ell$ monochromatic tree-pieces, 
and each of these is rooted at either a vertex with a pebble on it  
or a vertex that is the tail of an out-edge.
\end{lemma}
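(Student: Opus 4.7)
The plan is to count edges of each color inside the induced subgraph $H[V']$, convert that into a count of monochromatic tree-pieces, and then invoke the sparsity of $H$ to pin the total count of tree-pieces down to at least $\ell$. Fix a subgraph and let $V'\subseteq V$ with $n' = \card{V'}$ be its vertex set. By \reflem{each-color-is-map-sparse}, for each color $c_i$ the monochromatic subgraph $H_i$ is $(1,0)$-sparse, so the restriction of $H_i$ to $V'$ decomposes into monochromatic connected pieces, each of which is either a tree-piece (no cycle) or a map-graph-piece (contains a cycle). Let $t_i$ be the number of color-$c_i$ tree-pieces and $m_i$ the number of color-$c_i$ map-graph-pieces in $H[V']$.

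Since every vertex of $V'$ lies in exactly one color-$c_i$ piece, and since a color-$c_i$ piece on $c$ vertices contributes exactly $c-1$ edges when it is a tree-piece and exactly $c$ edges when it is a map-graph-piece, the number of color-$c_i$ edges in $H[V']$ equals $n' - t_i$. Summing over all colors gives $\grsp(V') = kn' - \sum_{i=1}^{k} t_i$. On the other hand, by invariant {\bf (I3)}, $\grsp(V') = kn' - \out(V') - \peb(V')$. Comparing these two expressions yields
\[
\sum_{i=1}^{k} t_i \;=\; \out(V') + \peb(V').
\]

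For the bound, I would apply \reflem{pebble-graphs-are-sparse}: the pebble-game graph is sparse, so $\grsp(V') \le kn' - \ell$, and combined with {\bf (I3)} this gives $\out(V') + \peb(V') \ge \ell$. Therefore $\sum_i t_i \ge \ell$, which is the desired lower bound on the number of tree-pieces.

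For the ``rooted at'' claim, consider any color-$c_i$ tree-piece $T$ on $c$ vertices in $H[V']$; it has $c-1$ edges, and because $H_i$ has out-degree at most $1$ at every vertex (invariant {\bf (I4)}), exactly one vertex $v\in T$ has no outgoing color-$c_i$ edge inside $T$. I would then apply {\bf (I4)} at $v$: either $v$ carries a pebble of color $c_i$, or $v$ is the tail of a color-$c_i$ edge $vw$ in $H$; in the latter case $vw$ cannot be inside $T$, so $w\notin V'$ and $vw$ is an out-edge of $V'$ with tail $v$. Either way $v$ is of the type required by the statement, and this will complete the proof. The only step that requires a bit of care is the double-counting identity for $\grsp(V')$ in terms of $t_i$ and $m_i$; I would make sure isolated vertices of $V'$ are correctly counted as single-vertex (empty) tree-pieces, which is consistent with the convention already used for $\ellteekay$ decompositions in the paper.
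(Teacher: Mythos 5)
Your proof is correct, but it takes a genuinely different route from the paper's. The paper argues \emph{constructively}: by {\bf (I3)} (together with sparsity) it gets $\out(V')+\peb(V')\ge\ell$, and then, for each vertex and each color, follows the unique outgoing monochromatic path guaranteed by {\bf (I4)}; invariant {\bf (I5)} shows this path terminates at a pebble, an out-edge tail, or a cycle, so each pebble and out-edge tail is the root of a distinct tree-piece. Your argument instead double-counts edges: each color-$c_i$ component of $H[V']$ contributes $c-1$ edges if it is a tree and $c$ if it contains a cycle, yielding $\grsp(V')=kn'-\sum_i t_i$; comparison with {\bf (I3)} gives the exact identity $\sum_i t_i=\out(V')+\peb(V')$, and sparsity then supplies the bound $\ge\ell$. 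This avoids any appeal to {\bf (I5)} and in fact recovers (for both ranges of $\ell$) the exact tree-piece count that the paper credits to Haas in the remark following \reflem{roots}, so it is arguably the cleaner way to get the numerical bound. You then handle the ``rooted at'' clause separately and correctly: the unique vertex of a tree-piece with $c_i$-out-degree zero inside the piece must, by {\bf (I4)}, carry a $c_i$-pebble or be the tail of a $c_i$-colored out-edge of $V'$. (One small imprecision: the uniqueness of that root vertex follows from the fact that a $c$-vertex tree has $c-1$ edges and {\bf (I4)} caps out-degree at one per color, not from {\bf (I4)} alone.) Both arguments are sound; the paper's is more directly tied to the pebble game's path-following mechanics, which it reuses in the canonical-construction section, while yours is a tidier standalone counting argument.
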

Recall that an out-edge from a subgraph $H'=(V',E')$ is an edge $vw$ with $v\in V'$
and $vw\notin E'$.
\begin{proof}
Let $H'=(V',E')$ be a non-empty subgraph of $H$, and assume without loss of generality that $H'$ is 
induced by $V'$.  By {\bf (I3)}, $\out (V')+\peb (V')\ge \ell$.  We will show that each pebble and 
out-edge tail is the root of a tree-piece.

Consider a vertex $v\in V'$ and a color $c_i$.  By {\bf (I4)} there is a unique monochromatic 
directed path of color $c_i$ starting at $v$.  By {\bf (I5)}, if this path ends at a pebble,
it does not have a cycle.  Similarly, if this path reaches a vertex that is the tail of an out-edge
also in color $c_i$ (i.e., if the monochromatic path from $v$ leaves $V'$), then the path cannot 
have a cycle in $H'$.

Since this argument works for any vertex in any color, for each color there is a partitioning 
of the vertices into those that can reach each pebble, out-edge tail, or cycle.  It follows
that each pebble and out-edge tail is the root of a monochromatic tree, as desired. 
 \end{proof}

Applied to the whole graph \reflem{subtrees} gives us the following.

\begin{lemma}[{\bf Pebbles are the roots of trees}]\labellem{roots}
In any pebble game configuration, each pebble of color $c_i$ is the 
root of a (possibly empty) monochromatic tree-piece of color $c_i$.
\end{lemma}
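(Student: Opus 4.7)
The plan is to derive this as a direct specialization of \reflem{subtrees} applied to the whole graph $H$. When the subgraph in question is $H$ itself, we have $V' = V$ and no edges leave $V'$, so $\out(V) = 0$. In this case the conclusion of \reflem{subtrees} forces every tree-piece it produces to be rooted at a pebble rather than at an out-edge tail. By {\bf (I3)} applied to $V$, the total number of pebbles is $\peb(V) = kn - |E(H)|$, and the goal is to argue that each of these pebbles individually roots a monochromatic tree-piece of its own color.

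To this end, I would revisit the argument inside the proof of \reflem{subtrees} a little more carefully. Fix a color $c_i$. Invariant {\bf (I4)} says each vertex either carries exactly one pebble of color $c_i$ or is the tail of exactly one out-edge of color $c_i$, so iterating the unique $c_i$-colored out-edge from any vertex traces a well-defined directed $c_i$-path. Invariant {\bf (I5)} then says this path terminates either at a pebble of color $c_i$ or in a cycle; the ``out-edge'' possibility of {\bf (I5)} is vacuous here because $\out(V) = 0$. In particular, for each pebble $p$ of color $c_i$ sitting on some vertex $v$, the set of vertices whose $c_i$-path terminates at $p$ induces a connected monochromatic subgraph in color $c_i$ that, again by {\bf (I5)}, contains no cycle and is therefore a tree-piece of color $c_i$ rooted at $v$.

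Finally, I would note that the tree-piece produced may be empty, consisting only of the vertex $v$ with no incident edges of color $c_i$; this occurs exactly when no other vertex's $c_i$-path passes through $v$. Since the whole argument is really just a specialization of \reflem{subtrees} combined with the invariants {\bf (I4)}--{\bf (I5)}, there is no substantive obstacle. The only thing to be careful about is observing that with $\out(V) = 0$ the roots produced by \reflem{subtrees} exhaust precisely the pebbles in the configuration, giving the one-to-one correspondence between pebbles of color $c_i$ and monochromatic tree-pieces of color $c_i$.
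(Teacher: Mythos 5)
Your proposal is correct and follows essentially the same route as the paper, which derives \reflem{roots} by applying \reflem{subtrees} to the whole graph $H$, where $\out(V)=0$ so every tree-piece root must be a pebble. The extra detail you add by re-tracing invariants {\bf (I4)} and {\bf (I5)} to exhibit the tree-piece hanging off each pebble is just an unpacking of the proof of \reflem{subtrees} and does not constitute a different approach.
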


{\bf Remark:}  Haas showed in \cite{haas:2002} that in a \ellteekay, a 
subgraph induced by $n'\ge 2$ vertices with $m'$ edges has exactly $kn'-m'$
tree-pieces in it.  \reflem{subtrees} strengthens Haas' result by extending it 
to the lower range and giving a construction that finds the tree-pieces, showing 
the connection between the $\ell+1$ pebble condition and the hereditary 
condition on proper \ellteekay. 

We conclude our investigation of arbitrary pebble game constructions 
with a description of the decomposition induced by the pebble game with colors.
	
\begin{restate}{non-canonical-decomposition}[{\bf The pebble-game-with-colors decomposition}] 
		A graph $G$ is a pebble-game graph if and only if it admits a decomposition into $k$	
		edge-disjoint subgraphs such that each  is $(1,0)$-sparse and every subgraph of $G$ 
		contains at least $\ell$  tree-pieces of the $(1,0)$-sparse graphs in the decomposition.
\end{restate}
	
\begin{proof}
Let $G$ be a pebble-game graph.  The existence of the $k$ edge-disjoint
$(1,0)$-sparse subgraphs was shown in  \reflem{each-color-is-map-sparse}, and
\reflem{subtrees} proves the condition on subgraphs.

For the other direction, we observe that a color 
$c_i$ with $t_i$ tree-pieces in a given subgraph can span at most $n-t_i$
edges; summing over all the colors shows that a graph with a pebble-game decomposition
must be sparse.  Apply \refthm{sparse-graphs-are-pebble-graphs} to complete the proof.
 \end{proof}
	
{\bf Remark: } We observe that a pebble-game decomposition for a 
Laman graph may be 
read out of the bipartite matching used in Hendrickson's Laman graph 
extraction algorithm \cite{hendrickson:uniqueRealizability:1992}.  
Indeed, pebble game orientations have a natural 
correspondence with the bipartite matchings used in 
\cite{hendrickson:uniqueRealizability:1992}.

Maps-and-trees are a special case of pebble-game decompositions for tight graphs:
if there are no cycles in $\ell$ of the colors, then the trees rooted at the 
corresponding $\ell$ pebbles must be spanning, since they have $n-1$ edges.
Also, if each color forms a forest in an upper range pebble-game decomposition, then
the tree-pieces condition ensures that the pebble-game decomposition is a proper
$\ellteekay$.

In the next section, we show that the pebble game can be specialized to correspond
to maps-and-trees and proper \ellteekay decompositions.

\section{Canonical Pebble Game Constructions}
In this section we prove the main theorems  (\refthm{canonical-decomposition-I} and \refthm{canonical-decomposition-II}), continuing the investigation of decompositions 
induced by pebble game constructions by studying the 
case where a minimum number of monochromatic cycles are created.  
The main idea, captured in \reflem{kill-m2-moves-locally} and illustrated in 
\reffig{m2-meta-picture}, is to avoid creating cycles while collecting pebbles.  We 
show that this is always possible, implying that monochromatic map-graphs are 
created only when we add more than $k(n'-1)$ edges to some set of $n'$ vertices.  For the lower
range, this implies that every color is a forest.
Every decomposition characterization of tight graphs discussed above follows immediately from
the main theorem, giving new proofs of the previous results in a unified framework.

In the proof, we will use two specializations of the pebble game moves.  
The first is a modification of the {\bf add-edge} move.

{\bf Canonical add-edge:} When performing an {\bf add-edge} move, cover the new edge with 
a color that is on both vertices if possible.  If not, then take the highest 
numbered color present.

The second is a restriction on which {\bf pebble-slide} moves we allow.

{\bf Canonical pebble-slide:} A {\bf pebble-slide} move is allowed only when it does
not create a monochromatic cycle.

We call a pebble game construction that uses only these moves {\bf canonical}.  In this section
we will show that every pebble-game graph has a canonical pebble game construction 
(\reflem{can-kill-m1-moves} and \reflem{kill-m2-moves-locally}) and that canonical 
pebble game constructions correspond to proper \ellteekay and maps-and-trees decompositions
(\refthm{canonical-decomposition-I} and \refthm{canonical-decomposition-II}).

We begin with a technical lemma that motivates the definition of canonical 
pebble game constructions.  It shows that the situations disallowed by 
the canonical moves are {\it all} the ways for cycles to form in 
the lowest $\ell$ colors.

\begin{lemma}[{\bf Monochromatic cycle creation}]\labellem{how-maps-form}
	Let $v\in V$ have a pebble $p$ of color $c_{i}$ on it and let $w$
	be a vertex in the same tree of color $c_i$ as $v$.	A  monochromatic cycle colored $c_{i}$ is 
	created in exactly one of the following ways: 
	\begin{enumerate}
		\item[{\bf (M1)}] The edge $vw$ is added with an {\bf add-edge} move.
		\item[{\bf (M2)}] The edge $wv$ is reversed by a {\bf pebble-slide} move and 
		the pebble $p$ is used to cover the reverse edge $vw$.	
	\end{enumerate}
	\end{lemma}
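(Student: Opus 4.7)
The plan is a case analysis on the two move types—add-edge and pebble-slide—together with careful bookkeeping of how pebble colors on edges and vertices change. The structural fact driving the analysis is that by invariants (I4) and (I5), immediately before any move each component of the $c_i$-subgraph is either a directed tree whose edges point toward the unique $c_i$-pebble at its root, or consists of a single directed cycle with in-trees attached; in particular, from any vertex there is at most one directed $c_i$-path, and it terminates either at the first $c_i$-pebble it meets or on a cycle.

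For the add-edge case, suppose adding $vw$ creates a new $c_i$-cycle. The new edge must be colored $c_i$, so the pebble $p$ lifted off $v$ had color $c_i$. Any $c_i$-cycle using $vw$ requires a directed $c_i$-path from $w$ back to $v$, and since no other $c_i$-edge was touched, this path already existed in the previous configuration. Applying (I5) to the old graph, that path must terminate at the first $c_i$-pebble it meets; since it reaches $v$, that pebble is $p$ itself. Hence $w$ was in the $c_i$-tree rooted at $v$, which is exactly the scenario (M1).

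For the pebble-slide case, let the move reverse $ab$ to $ba$ with pebble $q$ of color $c_j$ moving from $b$ onto the new edge, while the pebble previously on $ab$, of some color $c_k$, lands on $a$. The $c_i$-subgraph is affected only when $i\in\{j,k\}$. If $i=k\neq j$, a $c_i$-edge is merely removed, which cannot create a cycle. If $i=j=k$, then $a$ gives up its unique $c_i$-outgoing edge while simultaneously acquiring a $c_i$-pebble from the old edge, so $a$ has no $c_i$-outgoing path in the new graph and no new $c_i$-cycle can form through $ba$. The remaining case is $i=j\neq k$: the new edge $ba$ is colored $c_i$ and a new $c_i$-cycle through it requires a $c_i$-path from $a$ back to $b$ in the unchanged $c_i$-edges. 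Applying (I5) to the old graph, that path ends at the first $c_i$-pebble, which must be $q$ on $b$. Relabeling $v:=b$, $w:=a$, $p:=q$, the preconditions of (M2) hold exactly.

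The principal obstacle is the sub-case $i=j=k$ inside the pebble-slide analysis, where both the old and new edges are color $c_i$ and one must argue that the tail $a$ is simultaneously stripped of its sole $c_i$-outgoing connection and handed a $c_i$-pebble, ruling out any $c_i$-path leaving $a$ in the new graph. Once this corner is dispatched, and the trivial fact that pure edge-deletion is incapable of creating a cycle is noted, the cases above exhaust all the ways a pebble-slide can affect the $c_i$-subgraph, and the lemma follows.
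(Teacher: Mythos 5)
Your proof is correct and is essentially a careful expansion of the paper's own argument: both proceed by case analysis on the two move types, observing that the $c_i$-subgraph can only change via an add-edge or pebble-slide that touches color $c_i$. The paper's version is terse and leans on \reflem{roots} as a black box, while you work directly from invariants {\bf (I4)} and {\bf (I5)}, spelling out the three pebble-slide sub-cases ($i=k\neq j$, $i=j=k$, $i=j\neq k$) that the paper leaves implicit; your treatment of the $i=j=k$ corner, where the tail simultaneously loses its out-edge and gains the pebble, is a genuinely useful detail that the paper glosses over.
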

	 \begin{proof}
	 	Observe that the preconditions in the statement of the lemma are implied by
	 	 \reflem{pebble-game-invariants}. By \reflem{roots}  monochromatic 
	 	cycles form when the last pebble of color $c_{i}$ is removed from a 
	 	connected monochromatic subgraph. {\bf (M1)} and {\bf (M2)} 
	 	are the only ways to do this in a pebble game construction, since the 
		color of an edge only changes when it is inserted the first time 
		or a new pebble is put on it by a {\bf pebble-slide} move.
	  \end{proof}
	
	\reffig{m1-create-map} and \reffig{m2-create-map} show examples of 
	{\bf (M1)} and {\bf (M2)} map-graph creation moves, respectively, 
	in a $(2,0)$-pebble game construction.
	
	\begin{figure}[htbp]
		\centering
		\subfigure[]{\includegraphics[width=0.6\textwidth]{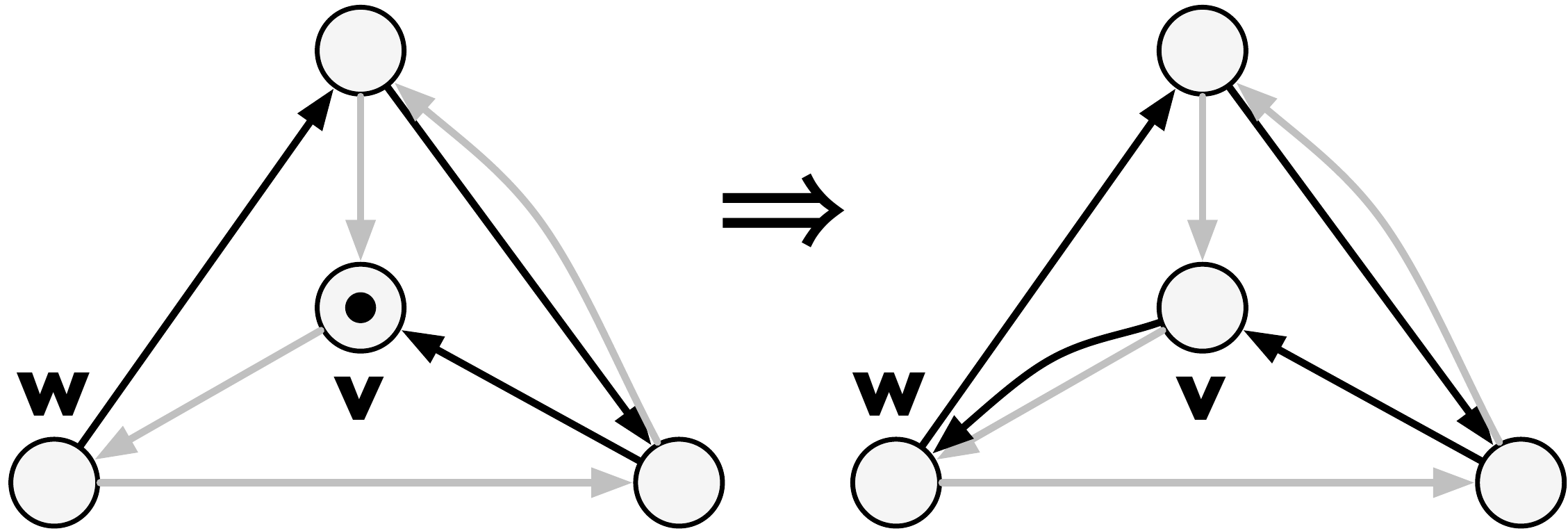}\label{fig.m1-create-map}}
%		\hspace{.1in}
		\subfigure[]{\includegraphics[width=0.6\textwidth]{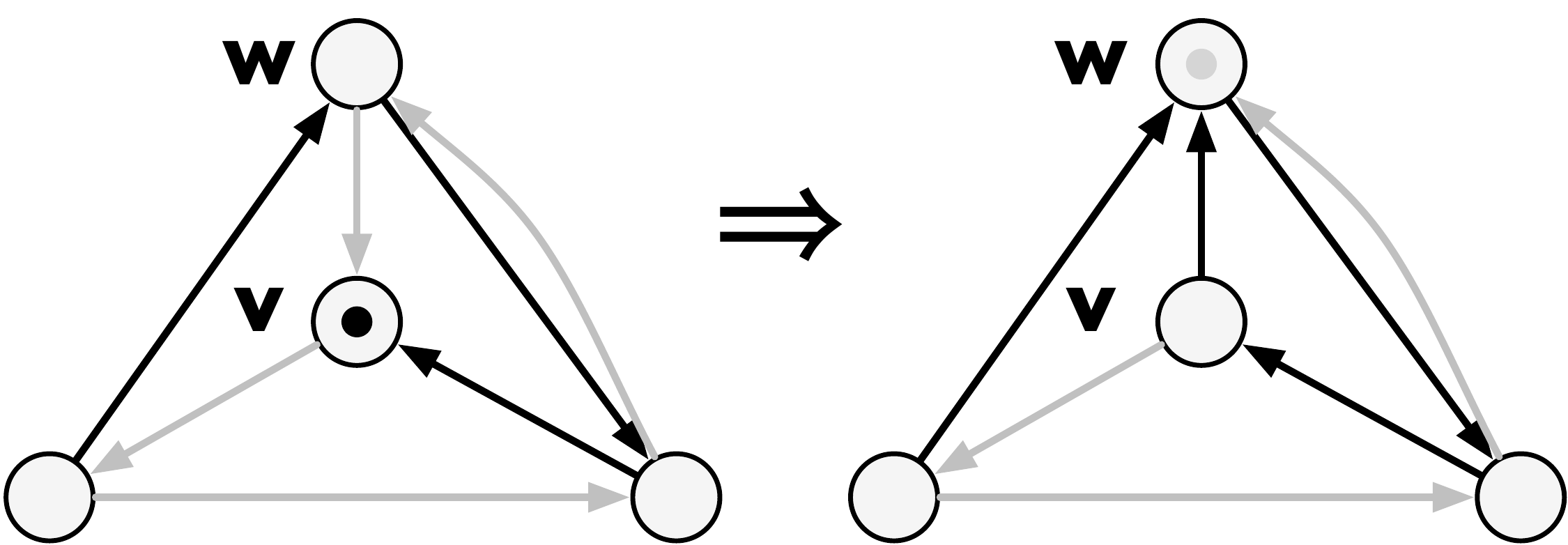}\label{fig.m2-create-map}}
		\caption{Creating monochromatic cycles in a $(2,0)$-pebble game. (a) A type {\bf (M1)} move creates a cycle by adding a black edge.  (b) A type {\bf (M2)} move creates a cycle with a {\bf pebble-slide} move.
		The vertices are labeled according to their role in the definition of the moves.}
		\end{figure}
	
	We next show that if a graph has a pebble game construction, then it has a canonical pebble 
	game construction.  This is done in two steps, considering the cases {\bf (M1)} and {\bf (M2)}
	separately.  The proof gives two constructions that implement the {\bf canonical add-edge}
	and {\bf canonical pebble-slide} moves.
	
	\begin{lemma}[{\bf The canonical add-edge move}]
		Let $G$ be a graph with a pebble game construction. 
		Cycle creation steps of type {\bf (M1)} can be eliminated in colors $c_{i}$ for $1\le i\le \ell'$, 
		where $\ell'=\min\{k,\ell\}$. \labellem{can-kill-m1-moves} 
	\end{lemma}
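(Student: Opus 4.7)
The plan is to show that applying the \textbf{canonical add-edge} rule at every edge insertion yields a pebble game construction of $G$ with no (M1) cycle creation in colors $c_1,\ldots,c_{\ell'}$. I would fix an arbitrary add-edge step inserting $vw$; the add-edge precondition together with \reflem{pebble-game-invariants} gives $\peb(\{v,w\})\ge\ell+1$, and I would then split on which branch of the canonical rule fires.

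\textbf{Case A: some color $c_j$ has pebbles on both $v$ and $w$.} The canonical rule covers $vw$ with a $c_j$-pebble, say from $v$. I would invoke {\bf (I4)} to observe that in the $c_j$-subgraph $H_j$ a vertex carries a $c_j$-pebble exactly when it is a sink, so each tree-component of $H_j$ contains at most one $c_j$-pebble; by \reflem{each-color-is-map-sparse} any other component of $H_j$ contains a cycle and therefore no pebble at all. Two vertices carrying $c_j$-pebbles must consequently lie in disjoint $c_j$-tree-pieces, so $w$ is not $c_j$-reachable from $v$, and the new edge closes no $c_j$-cycle. Since only the color $c_j$ is touched by the move, no (M1) cycle is created in any color.

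\textbf{Case B: no color has pebbles on both $v$ and $w$.} Then the pebble colors present on $\{v,w\}$ are pairwise distinct, so $\peb(\{v,w\})\le k$; combining with the add-edge precondition forces $\ell+1\le k$, i.e.\ $\ell<k$ and $\ell'=\ell$. Among the $\ell+1$ distinct-colored pebbles on $\{v,w\}$, at least one carries an index strictly greater than $\ell=\ell'$, since $\ell+1$ distinct indices cannot all fit inside $\{1,\ldots,\ell\}$. The canonical rule then uses the \emph{highest-indexed} color $c_h$ present on $\{v,w\}$, so $h>\ell'$, and any (M1) cycle that forms lies in a color outside the forbidden range $\{c_1,\ldots,c_{\ell'}\}$.

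The step I expect to need the most care is the uniqueness-of-roots claim in Case~A: \reflem{roots} by itself only says that each pebble roots \emph{some} tree-piece, so I would cite it in combination with \reflem{each-color-is-map-sparse} and {\bf (I4)} to rule out two $c_j$-pebbles sitting in the same $c_j$-tree-piece. Once this structural fact is nailed down, the two cases together show that the canonical add-edge rule is always applicable whenever a plain add-edge is, and never produces an (M1) cycle in the first $\ell'$ colors; iterating the rule across a construction of $G$ (which remains addable at every step by \reflem{can-bring-another-pebble}) finishes the lemma.
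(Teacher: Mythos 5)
Your proposal is correct and follows the same approach as the paper's proof: cover the new edge with a color shared by both endpoints when possible, and otherwise observe that the $\ell+1$ pebbles on $\{v,w\}$ are pairwise distinct in color, forcing $\ell<k$ and guaranteeing the highest-indexed color lies above $\ell'$. The paper's own proof is even terser (it states the rule but leaves the Case A cycle-freedom argument implicit), so your explicit verification via {\bf (I4)} and \reflem{each-color-is-map-sparse} that two same-colored pebbles cannot inhabit one tree-piece is a welcome expansion rather than a deviation.
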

	\begin{proof}
		For {\bf add-edge} moves, cover the edge with a color present on both $v$ and $w$ 
		if possible. If this is not possible, then there are $\ell+1$ 
		distinct colors present. Use the highest numbered color to cover 
		the new edge. 
	 \end{proof}
	
	{\bf Remark:} We note that in the upper range, there is always a repeated color, so 
	{\it no} {\bf canonical add-edge} moves create cycles in the upper range.

	The {\bf canonical pebble-slide} move is defined by a global condition.  To prove that
	we obtain the same class of graphs using only {\bf canonical pebble-slide} moves, 
	we need to extend \reflem{can-bring-another-pebble} to only 
	canonical moves.  The main step is to show that if there is {\it any }
	sequence of moves that reorients a path from $v$ to $w$, then there is 
	a sequence of canonical moves that does the same thing.
	
	\begin{lemma}[{\bf The canonical pebble-slide move}]\labellem{kill-m2-moves-locally}
		Any sequence of {\bf pebble-slide} moves leading to an {\bf add-edge}
		move can be replaced with one that has no {\bf (M2)} steps and allows the
		same {\bf add-edge} move. 
	\end{lemma}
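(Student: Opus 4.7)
The plan is to prove this by induction on the number of \textbf{(M2)} moves in the given pebble-slide sequence, eliminating the first \textbf{(M2)} move at each stage without introducing new ones. Once the \textbf{(M2)} count reaches zero the sequence is canonical (canonical \textbf{(M1)}-avoidance on the \textbf{add-edge} follows from \reflem{can-kill-m1-moves}).

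Let $s$ be the first \textbf{(M2)} move in the sequence, and let $u,z$ be the corresponding vertices as in \reflem{how-maps-form}: $s$ reverses the edge $zu$ by sliding a pebble $p$ of color $c_i$ from $u$ onto the new edge $uz$, and $z$ lies in the color-$c_i$ tree rooted at $u$. By invariant \textbf{(I5)} and \reflem{roots}, immediately before $s$ there is a unique directed color-$c_i$ path $P$ from $z$ to $u$ inside this tree. I would replace $s$ with the sequence of color-$c_i$ pebble-slides that reverses $P$: each slide moves $p$ one edge further along $P$ toward $z$, keeping every reversed edge colored $c_i$. Because these slides merely reroot the color-$c_i$ tree, without merging two disjoint color-$c_i$ components or closing a cycle, none of them is an \textbf{(M1)} or \textbf{(M2)} move, so each one is canonical.

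The main obstacle is that after the replacement the graph state differs from that produced by $s$: the edge $zu$ has not been reversed, and the color-$c_i$ pebbles along $P$ have shifted. To finish, I would argue that the remaining moves of the original sequence together with the final \textbf{add-edge} can still be carried out. The essential observation is that the replacement delivers $p$ to exactly the vertex $z$ where $s$ would have placed it, so any downstream pebble-slide that needed a pebble at $z$ (or at an intermediate vertex of $P$) still works, possibly after re-applying the same color-tree trick to its own color; this is the inductive step on the strictly shorter suffix containing fewer \textbf{(M2)} moves. A delicate point is showing that \reflem{can-bring-another-pebble} can be re-invoked from the new intermediate state whenever a subsequent slide can no longer be performed as originally scripted (for example, because it relied on the reversed orientation of $zu$); this is handled by noting that the total pebble count at each vertex is unchanged by the replacement, so the hypotheses of the $\ell{+}1$ pebble condition remain satisfied. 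Since the \textbf{(M2)} count strictly decreases at each replacement and never increases, the induction terminates in a pebble-slide sequence that allows the same \textbf{add-edge} move and contains no \textbf{(M2)} steps.
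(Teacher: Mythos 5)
Your approach diverges from the paper's in a way that leaves a genuine gap. The paper's \emph{shortcut construction} does not walk the pebble from $u$ to the path-adjacent vertex $z$; it walks the pebble from $u$ along the $c_i$-tree to the \emph{first} vertex of the original path (reading from the $a$ end) that lies in the tree $H'_i$, and then \emph{deletes} all the earlier moves up to that point. By choosing that entry vertex, the paper guarantees that the remaining prefix of the original path is entirely disjoint from $H'_i$, so the tree-walk and the remaining moves act on disjoint edge sets. Concatenating the untouched $a$-to-$z$ prefix with the tree walk from $z$ to $u$ again yields a \emph{simple} directed path, which is precisely the hypothesis needed to recurse. The induction is then on path length, and it visibly terminates.

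Your version walks only one "hop" — from $u$ to the path-adjacent vertex $z$ — and keeps all the remaining moves of the original sequence. But the original $a$-to-$z$ prefix may itself run through $H'_i$: it may share vertices, and even directed edges, with the $c_i$-tree path you just reversed. In that case some later scripted pebble-slide will try to reverse an edge that your tree walk has already flipped the other way, and that move simply cannot be executed. Your two fallbacks do not close this gap. Saying "re-apply the same color-tree trick to its own color" is circular without an argument that the process terminates — indeed your measure (the count of \textbf{(M2)} moves in the fixed script) is no longer well-defined once the script itself has changed, and you have not shown that the patched-up sequence has strictly fewer \textbf{(M2)} steps. And invoking \reflem{can-bring-another-pebble} only guarantees that \emph{some} pebble can be brought; that lemma says nothing about avoiding \textbf{(M2)} moves, which is exactly what the present lemma is supposed to establish. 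The observation that per-vertex pebble counts match is correct but insufficient: the obstruction is in edge orientations, not pebble counts. The fix is exactly the paper's: choose the target of the tree walk to be the first path vertex inside $H'_i$ and discard the moves in between, so that tree walk and residual path never interfere and the induction is on a strictly shorter simple path.
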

	In other words, if it is possible to collect $\ell+1$ pebbles on the ends 
	of an edge to be added, then it is possible to do this without creating 
	any monochromatic cycles.
		
	\reffig{m2-move-eliminate} and \reffig{m2-move-eliminate-2} illustrate the
	construction used in the proof of \reflem{kill-m2-moves-locally}.  We call this 
	the {\bf shortcut construction} by analogy to matroid union and intersection 
	augmenting paths used in previous work on the lower range.
	
	\reffig{m2-meta-picture} shows the structure of the proof.  The shortcut construction 
	removes an {\bf (M2)} step at the beginning of a sequence that 
	reorients a path from $v$ to $w$ with pebble-slides.  Since one application of the shortcut construction 
	reorients a simple path from a vertex $w'$ to $w$, and a path from $v$ to $w'$ is 
	preserved, the shortcut construction can be applied inductively to find the sequence of moves
	we want.
	
	\begin{figure}[htbp]
	\centering
	\subfigure[]{ \includegraphics[width=0.33\textwidth]{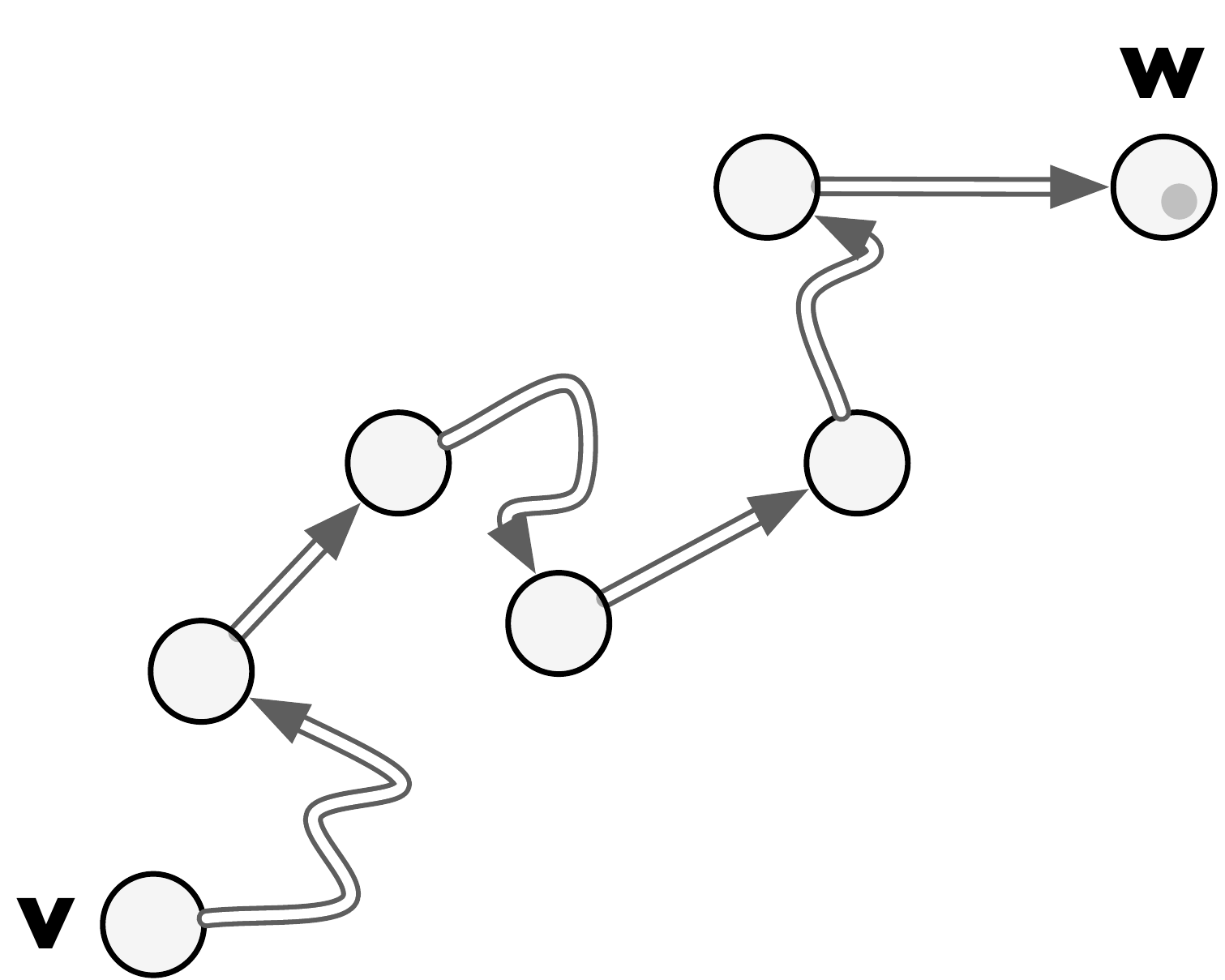} }
	\hspace{.3in}
	\subfigure[]{ \includegraphics[width=0.33\textwidth]{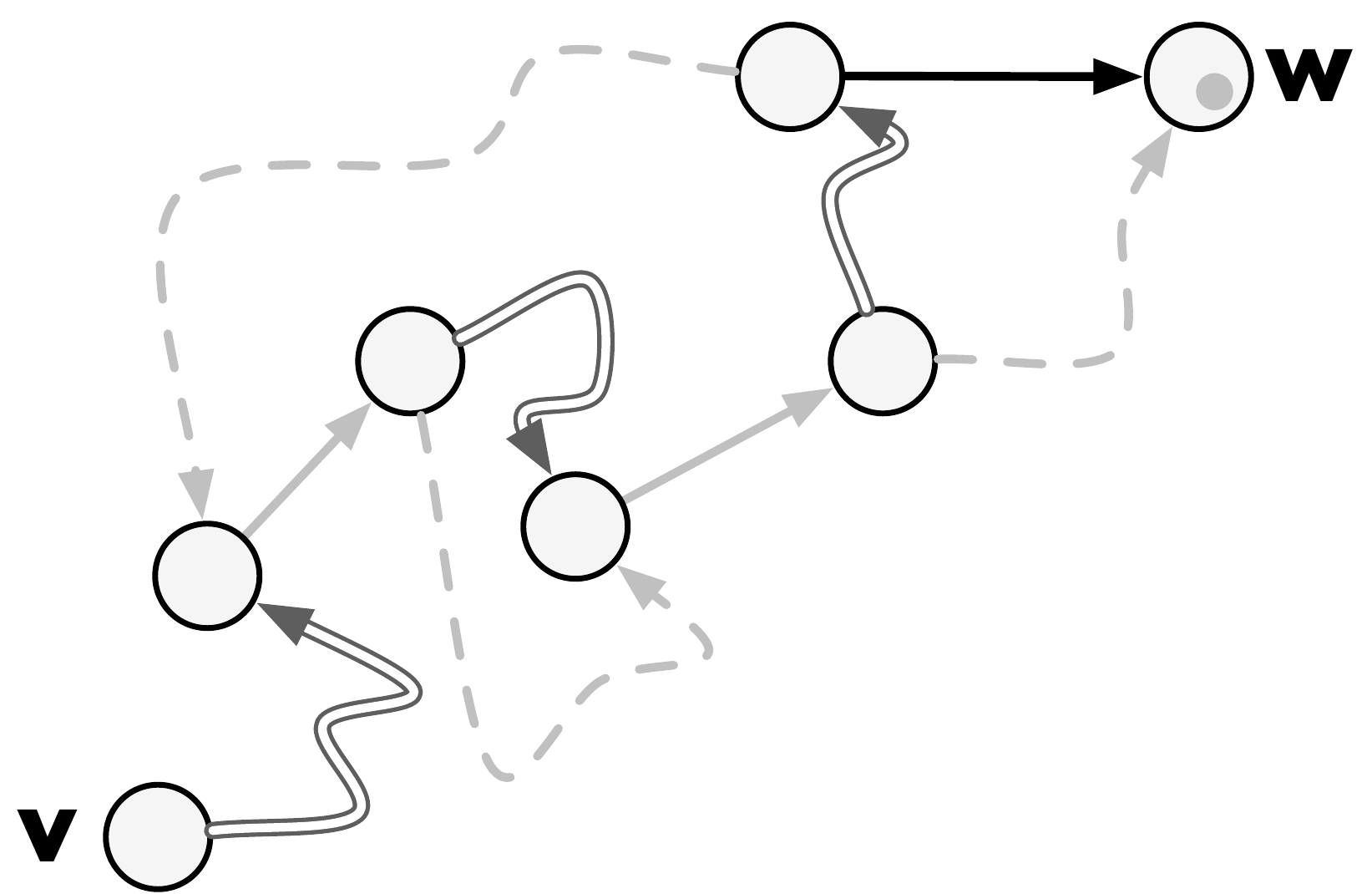} }
	\hspace{.3in}
	\subfigure[]{ \includegraphics[width=0.33\textwidth]{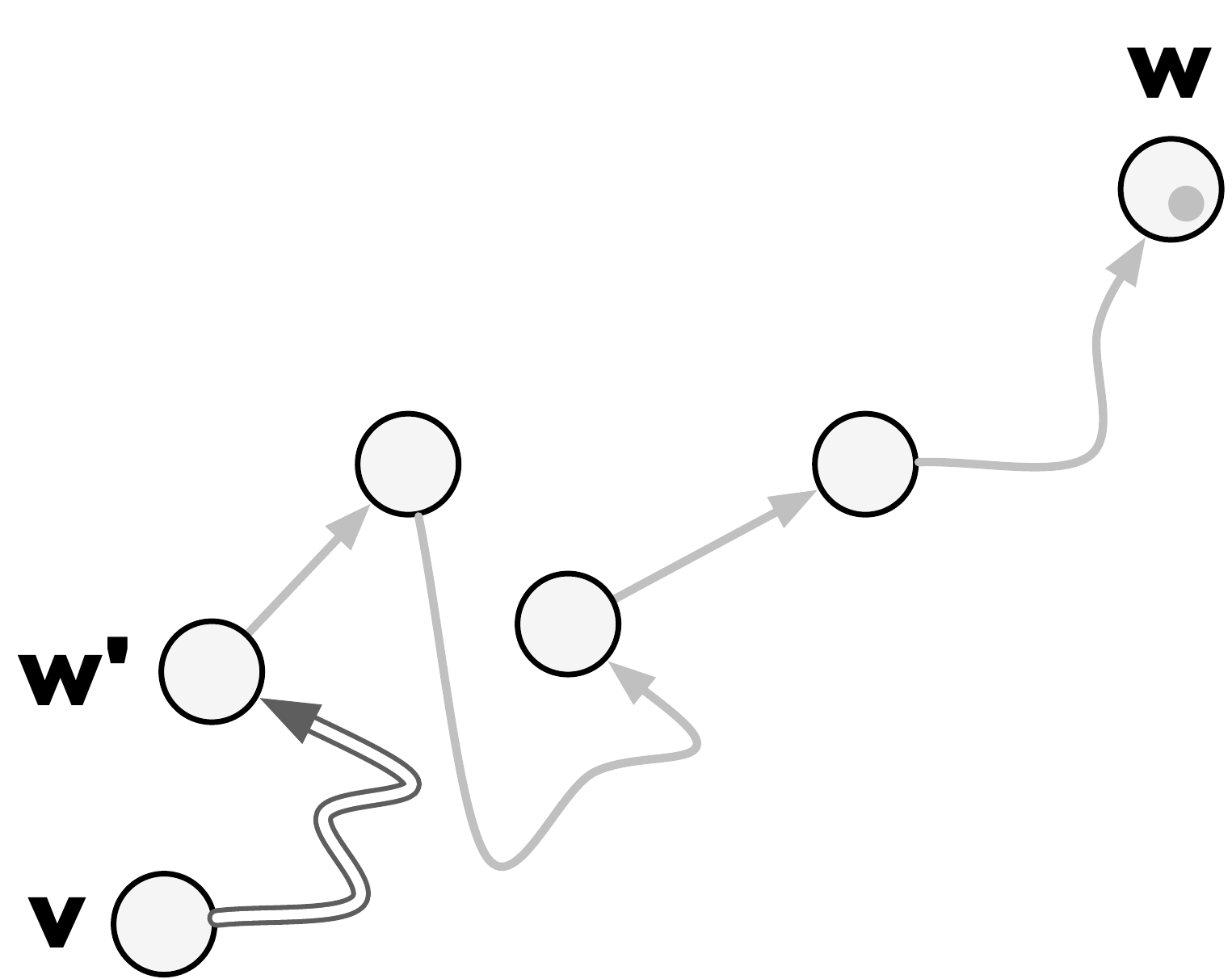} }
	
	\caption{Outline of the shortcut construction: (a) An arbitrary simple path from $v$ to $w$ with
	curved lines indicating simple paths.
	(b) An {\bf (M2)} step. The black edge, about to be flipped, would create a cycle, 
	shown in dashed and solid gray, of the (unique) gray tree rooted at $w$.  The solid gray edges were part of the original path from (a).
	(c) The shortened path
	to the gray pebble; the new path follows the gray tree all the way from the first time the original
	path touched the gray tree at $w'$.  The path from $v$ to $w'$ is simple, and the shortcut construction
	can be applied inductively to it.}
	\label{fig.m2-meta-picture}
	\end{figure}
	
	\begin{figure}[htbp]
	\centering
	\subfigure[]{ \includegraphics[width=0.6\textwidth]{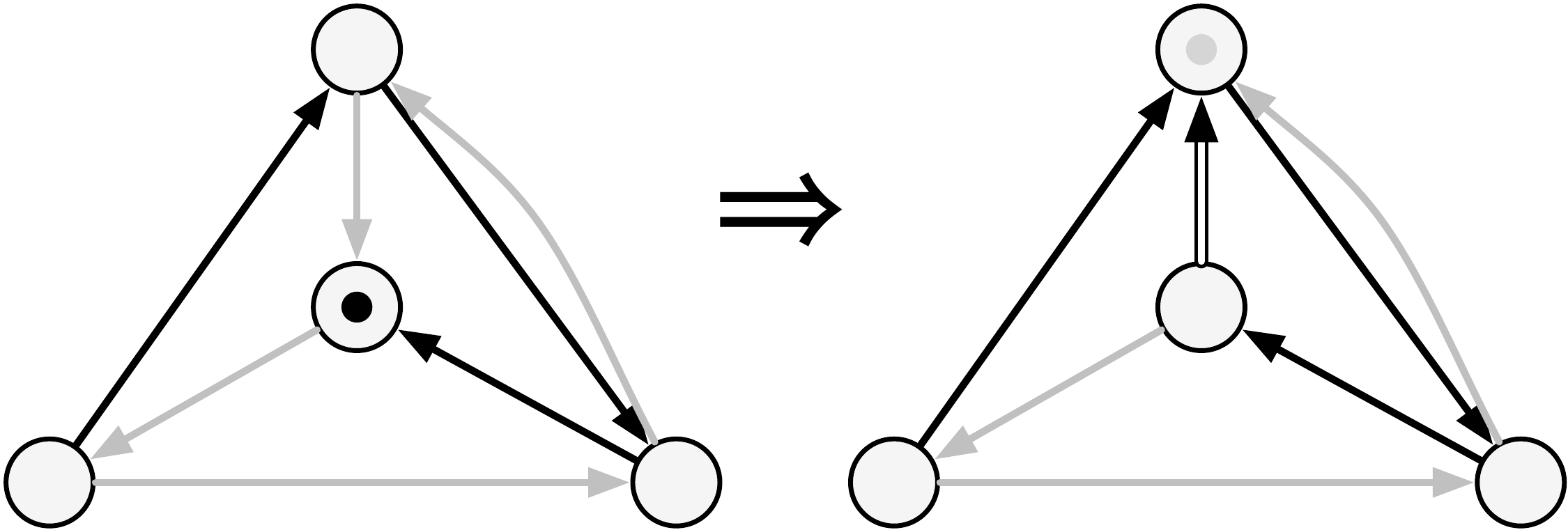} }
%	\hspace{.3in}
	\subfigure[]{ \includegraphics[width=0.6\textwidth]{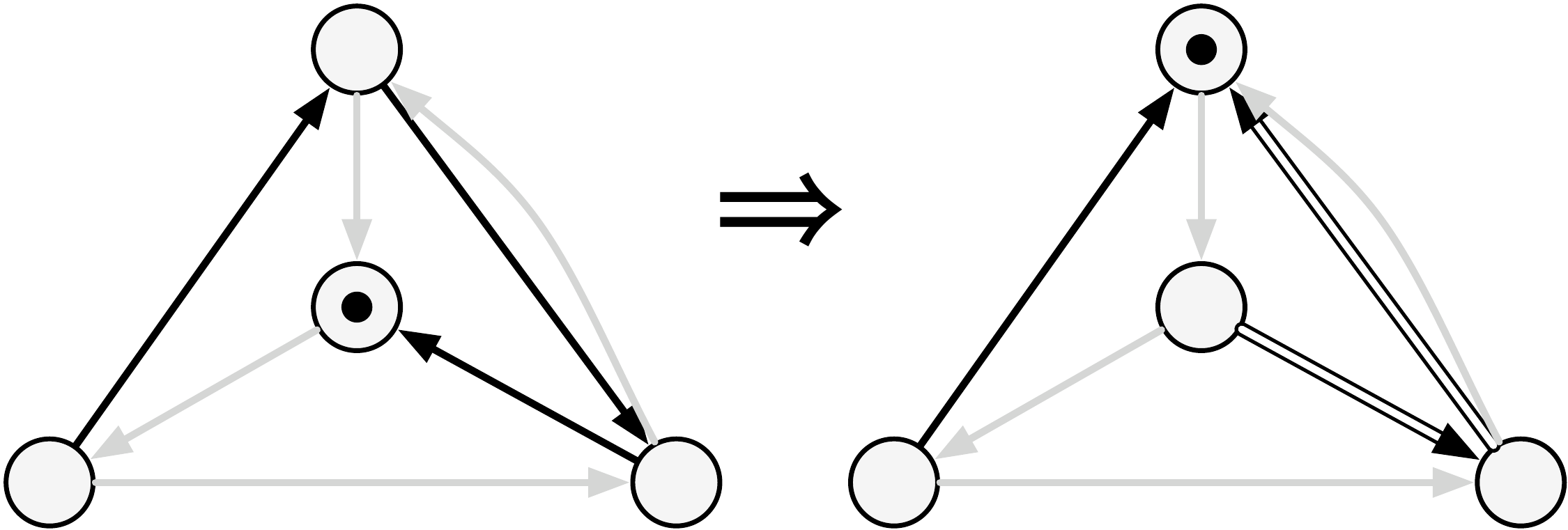} }
	
	\caption{Eliminating {\bf (M2)} moves: (a) an {\bf (M2)} move; (b) avoiding the 
	{\bf (M2)} by moving along another path.  The path where the pebbles move is indicated
	by doubled lines.}
	\label{fig.m2-move-eliminate}
	\end{figure}
	
	\begin{figure}[htbp]
	\centering
	\subfigure[]{ \includegraphics[width=0.6\textwidth]{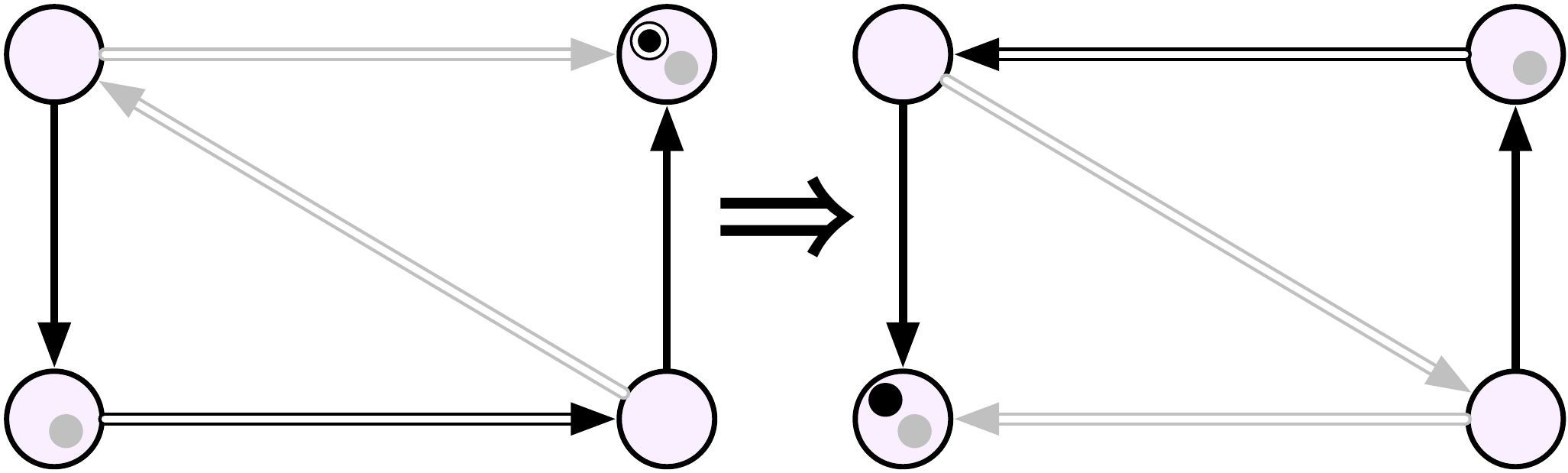} }
%	\hspace{.3in}
	\subfigure[]{ \includegraphics[width=0.6\textwidth]{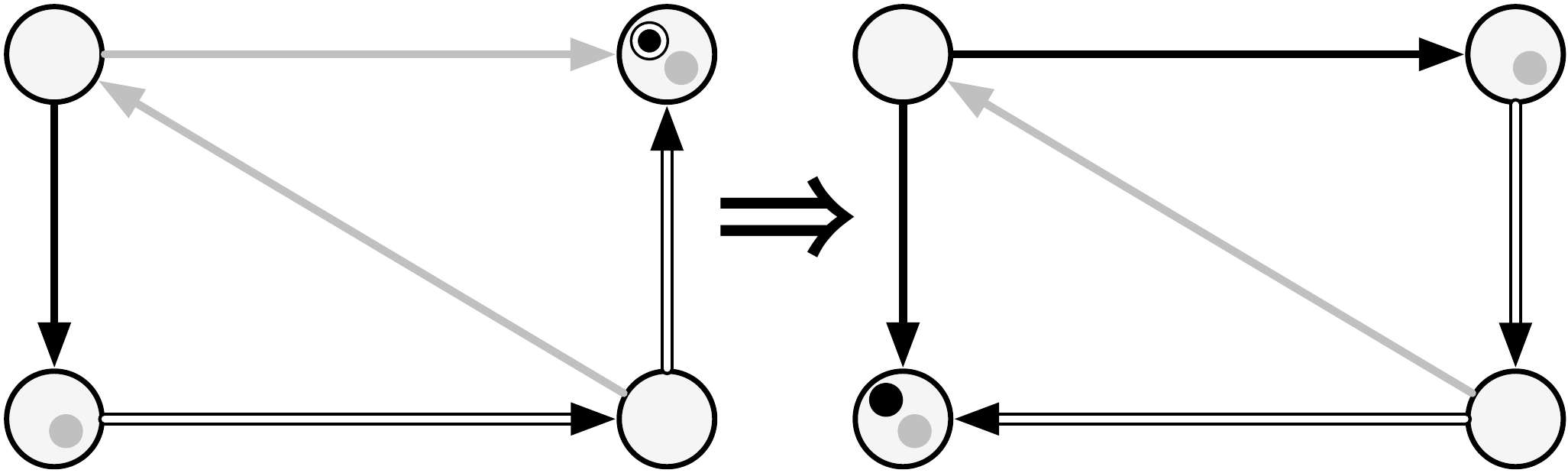} }
	
	\caption{Eliminating {\bf (M2)} moves: (a) the first step to move the black pebble along the 
	doubled path is {\bf (M2)}; (b) avoiding the 
	{\bf (M2)} and simplifying the path.}
	\label{fig.m2-move-eliminate-2}
	\end{figure}
	
	\begin{proof}
	Without loss of generality, we can assume that our sequence of moves reorients a 
	simple path in $H$, and that the first move (the end of the path)
	is {\bf (M2)}.  The {\bf (M2)} step moves a pebble 
	of color $c_i$ from a vertex $w$ onto the edge $vw$, which is reversed.  Because the move
	is {\bf (M2)}, $v$ and $w$ are contained in a maximal monochromatic tree of color $c_i$.
	Call this tree $H'_i$, and observe that it is rooted at $w$.
	
	Now consider the edges reversed in our sequence of moves.  As noted above, before we make
	any of the moves, these sketch out a simple path in $H$ ending at $w$.  Let $z$ be the 
	first vertex on this path in $H'_i$.  We modify our sequence of moves as follows: delete,
	from the beginning, every move before the one that reverses some edge $yz$; prepend onto
	what is left a sequence of moves that moves the pebble on $w$ to $z$ in $H'_i$.  
	
	Since no edges change color in the beginning of the new sequence, we have eliminated
	the {\bf (M2)} move.  Because our construction does not change 
	any of the edges involved in the remaining tail of the original sequence, the 
	part of the original path that is left in the new sequence will still be 
	a simple path in $H$, meeting our initial hypothesis.
	
	The rest of the lemma follows by induction.
	 \end{proof}
	
	Together \reflem{can-kill-m1-moves} and \reflem{kill-m2-moves-locally} prove the following. 
	\begin{lemma}
		If $G$ is a pebble-game graph, then $G$ has a canonical pebble game construction. 
		\labellem{canonical-constructions-exist} 
	\end{lemma}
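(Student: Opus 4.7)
The plan is to prove the lemma by induction on the number of edges of $G$, building up a canonical construction one edge at a time. By \reflem{pebble-graphs-are-sparse} the graph $G$ is $(k,\ell)$-sparse, so every subgraph of $G$ is also sparse. Fix an arbitrary ordering $e_1,\ldots,e_m$ of the edges of $G$, and let $G_t$ denote the subgraph consisting of $e_1,\ldots,e_t$. I would prove by induction on $t$ that $G_t$ admits a canonical pebble game construction ending in some directed graph $H_t$. The base case $t=0$ is the initial configuration (no edges, $k$ pebbles of each color on every vertex), which is vacuously canonical.

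For the inductive step, suppose we have a canonical construction of $G_t$ whose final state is $H_t$, and write $e_{t+1}=vw$. Because $G_{t+1}$ is a subgraph of $G$, it is sparse, so $H_t+vw$ is sparse. I would then iterate the $\ell+1$ pebble condition (\reflem{can-bring-another-pebble}) starting from $H_t$ until at least $\ell+1$ pebbles sit on $\{v,w\}$; each application preserves sparsity of $H_t+vw$ since pebble-slides do not change the underlying undirected graph. This produces a (not necessarily canonical) sequence $\sigma$ of pebble-slide moves that legalizes an add-edge move inserting $vw$. Applying \reflem{kill-m2-moves-locally} to $\sigma$ yields a new sequence $\sigma'$ containing no (M2) step---hence composed entirely of canonical pebble-slides---which still ends in a state where $vw$ can be added. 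Finally, I insert $vw$ using the canonical add-edge rule of \reflem{can-kill-m1-moves} (reuse a color present on both endpoints when possible, otherwise take the highest-numbered color present), completing a canonical construction of $G_{t+1}$.

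The only delicate point, and the step I expect to need the most care, is the composition of the two specialization lemmas: after replacing a pebble-slide sequence via the shortcut construction, we must still be in a configuration from which the intended add-edge move is legal. This is precisely what is guaranteed by the ``allows the same add-edge move'' clause in the statement of \reflem{kill-m2-moves-locally}, and it is exactly the hypothesis needed for the canonical add-edge of \reflem{can-kill-m1-moves} to fire. With that composition settled, the inductive step closes and the construction produced after $m$ iterations is a canonical pebble game construction of $G = G_m$.
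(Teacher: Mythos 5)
Your proposal is correct and follows the same approach as the paper: the paper itself establishes this lemma as a direct consequence of \reflem{can-kill-m1-moves} and \reflem{kill-m2-moves-locally}, which justify the canonical add-edge and canonical pebble-slide moves respectively. You have simply made explicit the edge-by-edge induction that the paper leaves implicit, correctly observing that sparsity of $G$ (and hence of each $G_t$) is what lets \reflem{can-bring-another-pebble} collect $\ell+1$ pebbles, and that the "allows the same add-edge move" clause of \reflem{kill-m2-moves-locally} is what lets the two specialization lemmas compose.
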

	
	Using canonical pebble game constructions, we can identify 
	the tight pebble-game graphs with maps-and-trees and \ellteekay\, graphs. 
	
	\begin{restate}{canonical-decomposition-I}[{\bf Main Theorem (Lower Range): Maps-and-trees coincide with pebble-game graphs}]
	Let $0\le \ell\le k$. A graph $G$ is a tight pebble-game graph if and only if $G$ is a $(k,\ell)$-maps-and-trees.
	\end{restate}
	\begin{proof}
		
		As observed above, a maps-and-trees decomposition is a special case of the pebble 
		game decomposition.  Applying \refthm{non-canonical-decomposition}, we see that
		any maps-and-trees must be a pebble-game graph.
		
		For the reverse direction, consider a canonical pebble game construction of a tight graph. 
		From \reflem{pebble-graphs-are-sparse}, we see that there are $\ell$ pebbles left on 
		$G$ at the end of the construction.
		The definition of the {\bf canonical add-edge} move 
		implies that there must be at least one pebble of 
		each $c_i$ for $i=1,2,\ldots,\ell$.  It follows that there is exactly one of each of these 
		colors.  By \reflem{roots}, each of these pebbles is the root of a monochromatic tree-piece
		with $n-1$ edges, yielding the required $\ell$ edge-disjoint spanning trees.
	 \end{proof}
	\begin{restatecor}{m-a-t-equals-tight}
		[\textbf{Nash-Williams \cite{nash-williams:decomposition-into-forests:1964}, Tutte \cite{tutte:decomposing-graph-in-factors-1961}, White and Whiteley \cite{whiteley:union-matroids}}] 
		Let $\ell\le k$. A graph $G$ is tight if and only if has a $(k,\ell)$-maps-and-trees decomposition.
	\end{restatecor}
	
	We next consider the decompositions induced by canonical pebble game 
	constructions when $\ell\ge k+1$. 
	\begin{restate}{canonical-decomposition-II}[{\bf Main Theorem (Upper Range): Proper Trees-and-trees coincide with pebble-game graphs}]
		Let $k\le \ell\le 2k-1$. A graph $G$ is a tight pebble-game graph if and only if it is a proper \ellteekay with $kn-\ell$ edges. 
	\end{restate}
	\begin{proof}
		As observed above, a proper \ellteekay decomposition must be sparse.  
		What we need to show is that a canonical pebble game construction of a tight 
		graph produces a proper \ellteekay.
		
		By \refthm{non-canonical-decomposition} and \reflem{canonical-constructions-exist}, 
		we already have the condition on tree-pieces and the decomposition into $\ell$ edge-disjoint
		trees.  Finally, an application of {\bf (I4)}, shows that every vertex must in in 
		exactly $k$ of the trees, as required.
	 \end{proof}
	
	\begin{restatecor}{t-a-t-equals-tight}[\textbf{Crapo \cite{crapo:rigidity:88}, Haas \cite{haas:2002}}] 
		Let $k\le \ell\le 2k-1$. A graph $G$ is tight if and only if it is a proper \ellteekay.
	\end{restatecor}
	
	\section{Pebble game algorithms for finding decompositions}
	A naïve implementation of the constructions in the previous section leads to an 
	algorithm requiring $\Theta(n^2)$ time to collect each pebble in a canonical 
	construction: in the worst case $\Theta(n)$ applications of the construction 
	in \reflem{kill-m2-moves-locally} requiring $\Theta(n)$ time each, giving a 
	total running time of $\Theta(n^3)$ for the \textbf{decomposition} problem.
	
	In this section, we describe algorithms for the {\bf decomposition} problem
	that run in time $O(n^2)$.  We begin with the overall structure of the algorithm.

	\begin{algorithm}
		[\textbf{The canonical pebble game with colors}]\labelalg{canonical-pebble-game}
\qquad 

\noindent		{\bf Input:} A graph $G$. \\
		{\bf Output:} A pebble-game graph $H$. \\
		{\bf Method:} 
		\begin{itemize}
			\item Set $V(H)=V(G)$ and place one pebble of each color on the vertices of $H$.
			\item 	For each edge $vw\in E(G)$ try to collect at least $\ell+1$ pebbles on $v$ and $w$ using {\bf pebble-slide} moves as described by \reflem{kill-m2-moves-locally}. 
			\item If at least $\ell+1$ pebbles can be collected, add $vw$ to $H$ using an {\bf add-edge} move as in \reflem{can-kill-m1-moves}, otherwise discard $vw$. 

			\item Finally, return $H$, and the locations of the pebbles.
		\end{itemize}
	\end{algorithm}
	
	\paragraph{Correctness.}  \refthm{sparse-graphs-are-pebble-graphs} and the
	result from \cite{whiteley:union-matroids} that the sparse graphs are the independent
	sets of a matroid show that $H$ is a maximum sized sparse subgraph of $G$.  Since
	the construction found is canonical, the main theorem shows that the coloring of 
	the edges in $H$ gives a maps-and-trees or proper \ellteekay decomposition.

	\paragraph{Complexity.} We start by observing that the running time of \refalg{canonical-pebble-game} is the time taken to process $O(n)$ edges added to $H$ and $O(m)$ edges not added to $H$. We first consider the cost of an edge of $G$ that is added to $H$.

	Each of the pebble game moves can be implemented in constant time. 
	What remains is to describe an efficient way to find and move the pebbles.  We use the 
	following algorithm as a subroutine of \refalg{canonical-pebble-game} to do this.
	
	\begin{algorithm}
		[\textbf{Finding a canonical path to a pebble.}]\labelalg{find-one-pebble}
\qquad \\
\noindent{\bf Input:} Vertices $v$ and $w$, and a pebble game configuration on a directed graph $H$. \\
\noindent {\bf Output:} If a pebble was found, `yes', and `no' otherwise. The configuration of $H$ is updated. \\
\noindent		{\bf Method:}
\begin{itemize}
	\item	Start by doing a depth-first search from from $v$ in $H$. If no pebble not on $w$ is found, stop and return `no.'

	\item	Otherwise a pebble was found.  We now have a path $v=v_{1},e_{1},\ldots,e_{p-1},v_{p}=u$, where the $v_{i}$ are vertices and $e_{i}$ is the edge $v_{i}v_{i+1}$. Let $c[e_{i}]$ be the color of the pebble on $e_{i}$.  We will use the array $c[]$ to keep track of the colors of pebbles on vertices and edges 
		after we move them and the array $s[]$ to sketch out a canonical path from 
		$v$ to $u$ by finding a successor for each edge.

	\item Set\, $s[u]=`end'$ and set\, $c[u]$ to the color of an arbitrary pebble on $u$. 
	We walk on the path in reverse order: $v_{p},e_{p-1}, e_{p-2}, \ldots, e_{1},v_{1}$.
	For each $i$, check to see if $c[v_{i}]$ is set; if so, go on to the next $i$. Otherwise, check to see if $c[v_{i+1}]=c[e_{i}]$. 
	\item If it is, set $s[v_{i}]=e_{i}$ and set $c[v_{i}]=c[e_{i}]$, and go on to the next edge.
	\item	Otherwise $c[v_{i+1}]\neq c[e_{i}]$, try to find a monochromatic path in color $c[v_{i+1}]$ from $v_{i}$ to $v_{i+1}$. If a vertex $x$ is encountered for which $c[x]$ is set, we have a path $v_{i}=x_{1},f_{1},x_{2},\ldots,f_{q-1},x_{q}=x$ that is monochromatic in the color 
	of the edges; set $c[x_{i}]=c[f_{i}]$ and $s[x_{i}]=f_{i}$ for $i=1,2,\ldots,q-1$. If $c[x]=c[f_{q-1}]$, stop. Otherwise, recursively check that there is not a monochromatic $c[x]$ path from $x_{q-1}$ to $x$ using this same procedure.

	\item	Finally, slide pebbles along the path from 
	the original endpoints $v$ to $u$ specified by the successor array $s[v]$, $s[s[v]]$, $\ldots$
	\end{itemize}
	\end{algorithm}

	The correctness of \refalg{find-one-pebble} comes from the fact that it is
	implementing the shortcut construction.
	Efficiency comes from the fact that instead of potentially moving the 
	pebble back and forth, \refalg{find-one-pebble} pre-computes a canonical
	path crossing each edge of $H$ at most three times: once in the initial
	depth-first search, and twice while converting the initial path to a 
	canonical one.  It follows that each accepted edges takes $O(n)$ time, for
	a total of $O(n^2)$ time spent processing edges in $H$.

	Although we have not discussed this explicity, for the algorithm to be efficient
	we need to maintain components as in \cite{pebblegame}.
	After each accepted edge, the components of $H$ 
	can be updated in time $O(n)$. Finally,  the results of 
	\cite{pebblegame,cccg} show that the rejected edges take an amortized $O(1)$ time each.

	Summarizing, we have shown that the canonical pebble game with colors 
	solves the {\bf decomposition} problem in time $O(n^2)$.
	
	\section{An important special case: Rigidity in dimension $2$ and slider-pinning}
	In this short section we present a new application for the special case of 
	practical importance, $k=2$, $\ell=3$.  As discussed in the introduction,
	Laman's theorem \cite{laman} characterizes minimally rigid graphs as the $(2,3)$-tight
	graphs.  In  recent work on slider pinning,  developed after the current 
	paper was submitted, we introduced the slider-pinning model of rigidity \cite{sliders,genericity}.
	Combinatorially, we model the bar-slider frameworks as simple graphs together with some 
	loops placed on their vertices in such a way that there are no more than $2$ loops per 
	vertex, one of each color.  
	
	We characterize the minimally rigid bar-slider graphs \cite{genericity}
	as graphs that are: 
	\begin{enumerate}
		\item $(2,3)$-sparse for subgraphs containing no loops.
		\item $(2,0)$-tight when loops are included.
	\end{enumerate}
	We call these graphs $(2,0,3)$-graded-tight, and they are a special case of 
	the graded-sparse graphs studied in our paper \cite{graded}.

	The connection with the pebble games in this paper is the following.
	\begin{corollary}[{\bf Pebble games and slider-pinning}]
		In any $(2,3)$-pebble game graph, if we replace pebbles by loops, 
		we obtain a  $(2,0,3)$-graded-tight graph.
	\end{corollary}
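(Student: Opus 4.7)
The plan is to verify the two defining conditions of $(2,0,3)$-graded-tightness directly from the pebble game invariants and \refthm{sparse-graphs-are-pebble-graphs}. First I would fix the setup: the loop-augmented graph $G'$ is obtained from the directed pebble-game graph $H$ by placing, for each vertex $v$ and color $c_i$, a loop of color $c_i$ at $v$ exactly when $\peb_i(v)=1$. Since invariant {\bf (I4)} forces $\peb_i(v)\in\{0,1\}$, the construction respects the slider-pinning cap of at most one loop per color per vertex.

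For the first condition---loop-free subgraphs are $(2,3)$-sparse---the proof is immediate from \refthm{sparse-graphs-are-pebble-graphs}: the underlying undirected pebble-game graph $G$ is $(2,3)$-sparse, and any loop-free subgraph of $G'$ on $n'$ vertices spans only edges of $G$, so it has at most $2n'-3$ of them.

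The main content is the second condition, that $G'$ is $(2,0)$-tight once loops are counted, and this is essentially a restatement of invariant {\bf (I3)}. For any $V'\subseteq V$ of size $n'$, the edges plus loops of $G'$ spanned by $V'$ total $\grsp(V')+\peb(V')$. Invariant {\bf (I3)} yields $\grsp(V')+\out(V')+\peb(V')=2n'$, and rearranging gives $\grsp(V')+\peb(V')\le 2n'$, establishing $(2,0)$-sparsity. Setting $V'=V$ forces $\out(V)=0$, so $\grsp(V)+\peb(V)=2n$ exactly, which is the required tightness count.

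I expect no genuine obstacle: once pebbles are identified with loops and {\bf (I3)} is read as a ``$(2,0)$ count with deficiency $\out(V')$,'' the corollary reduces to bookkeeping. The only subtle point worth stating carefully is that a loop at $v\in V'$ contributes to the edge-count of the subgraph induced by $V'$---which it does under the standard convention---so that the substitution of $\peb(V')$ for a loop count is valid in both the sparsity inequality and the tightness equality.
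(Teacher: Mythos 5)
Your proposal is correct and takes the same approach the paper intends: the paper's proof is the one-liner ``Follows from invariant \textbf{(I3)},'' and your write-up simply fills in the bookkeeping, using \textbf{(I4)} for the per-color loop cap and \refthm{sparse-graphs-are-pebble-graphs} for the loop-free $(2,3)$-sparsity, which is exactly what the cited invariant argument requires.
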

	\begin{proof}
		Follows from invariant {\bf (I3)} of \reflem{pebble-game-invariants}.
	\end{proof}
	
	In \cite{sliders}, we study a special case of slider
	pinning where every slider is either vertical or horizontal.  We model the sliders 
	as pre-colored loops, with the color indicating $x$ or
	$y$ direction.  For this axis parallel slider case, the minimally rigid graphs are 
	characterized by:
	\begin{enumerate}
		\item $(2,3)$-sparse for subgraphs containing no loops.
		\item Admit a $2$-coloring of the edges so that each color is a forest
		(i.e., has no cycles),
		and each monochromatic tree spans exactly one loop of its color.
	\end{enumerate}
	
	This also has an interpretation in terms of colored pebble games.
	\begin{corollary}[{\bf The pebble game with colors and slider-pinning}]
		In any canonical $(2,3)$-pebble-game-with-colors graph, if we replace pebbles by loops
		of the same color, we obtain the graph of a minimally pinned axis-parallel bar-slider 
		framework.
	\end{corollary}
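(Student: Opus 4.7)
The plan is to verify the two combinatorial conditions characterizing minimally pinned axis-parallel bar-slider graphs. Let $H$ be a canonical $(2,3)$-pebble-game graph on vertex set $V$ with underlying undirected graph $G$, and let $G^{+}$ be obtained from $G$ by adding, for each pebble of color $c_i$ sitting on a vertex $v$, a loop of color $c_i$ at $v$. Condition~1 is immediate from \refthm{sparse-graphs-are-pebble-graphs}, since every loopless subgraph of $G^{+}$ is a subgraph of the pebble-game graph $G$, hence $(2,3)$-sparse. Invariant {\bf (I4)} also gives $\peb_i(v)\le 1$, so no vertex carries two loops of the same color, matching the slider-pinning model.

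For the forest part of condition~2, I would appeal to \reflem{how-maps-form}: a monochromatic cycle can only appear through an {\bf (M1)} or {\bf (M2)} move. Just before any {\bf add-edge} move in a $(2,3)$-pebble game we have $\peb(\{v,w\}) \ge \ell+1 = 4$; since $\peb_i(u)\le 1$ for each $u$ and $i$, all four values $\peb_1(v), \peb_2(v), \peb_1(w), \peb_2(w)$ must equal $1$, so both colors are present on both endpoints. The {\bf canonical add-edge} rule therefore covers $vw$ with a repeated color $c_i$, and since \reflem{roots} makes $v$ and $w$ the unique roots of their respective color-$c_i$ tree-pieces, those tree-pieces must be distinct, so no {\bf (M1)} cycle arises. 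The {\bf canonical pebble-slide} rule forbids {\bf (M2)} outright. Hence every color class of $G$ is a forest.

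Finally, to verify the ``exactly one loop of each color per monochromatic tree'' clause, I would apply \reflem{subtrees} with $V' = V$. Since $\out(V)=0$, the only possible roots of color-$c_i$ tree-pieces are vertices carrying pebbles of color $c_i$, and \reflem{roots} says every such pebble is in fact such a root. Because color $c_i$ is already known to be a forest, each of its connected components is a single tree with a unique root, giving a bijection between color-$c_i$ trees and pebbles of color $c_i$. Replacing pebbles by loops of the same color therefore deposits exactly one loop of color $c_i$ on each color-$c_i$ tree, completing condition~2. The delicate step is showing the forest property: both canonical rules must be used together to exclude the two sources of monochromatic cycles identified in \reflem{how-maps-form}. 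The remaining one-loop-per-tree count is then a clean consequence of invariant {\bf (I4)} and \reflem{subtrees}.
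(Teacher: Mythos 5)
Your proof is correct, and it rests on the same underlying ingredients the paper relies on. The paper's own proof is a one-liner — ``Follows from \refthm{canonical-decomposition-II}, and \reflem{roots}'' — leaning on the Main Theorem for the upper range to supply both the forest property of the colors and the $\ell = 3$ accounting of trees. You instead unpack what would otherwise be cited: condition~1 comes from \refthm{sparse-graphs-are-pebble-graphs}; the forest property is re-derived directly from \reflem{how-maps-form} together with the two canonical move rules (with the clean observation that in the $(2,3)$ case the $\ell + 1 = 4$ pebble requirement forces both colors on both endpoints, so \textbf{canonical add-edge} never triggers \textbf{(M1)} and the rule itself rules out \textbf{(M2)}); and the one-loop-per-tree count comes from \reflem{subtrees} applied to $V'=V$ plus \reflem{roots}. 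This is the content that the paper packages inside \refthm{canonical-decomposition-II} and its remark that no \textbf{canonical add-edge} creates cycles in the upper range, so you are not taking a genuinely different route so much as inlining the proof of the cited theorem. One small spot that could be tightened: when you say the two $c_i$-tree-pieces rooted at $v$ and $w$ ``must be distinct,'' the reason is worth stating explicitly — by \textbf{(I4)} both $v$ and $w$ have $\out_i = 0$, and a monochromatic tree, being an in-tree under the out-edge orientation, has exactly one such vertex, so two pebbles of color $c_i$ cannot share a tree-piece. With that filled in, the argument is airtight and matches the paper's intent.
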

	\begin{proof}
		Follows from \refthm{canonical-decomposition-II}, and \reflem{roots}.
	\end{proof}

	\section{Conclusions and open problems}
	We presented a new characterization of $(k,\ell)$-sparse graphs, the 
	\textbf{pebble game with colors}, and
	used it to give an efficient algorithm for finding decompositions of sparse 
	graphs into edge-disjoint trees.  Our algorithm finds such sparsity-certifying 
	decompositions in the upper range and 
	runs in time $O(n^2)$, which 
	is as fast as the algorithms for recognizing sparse graphs in the upper range 
	from \cite{pebblegame}.

	We also used the pebble game with colors to describe a new sparsity-certifying 
	decomposition that applies to the entire matroidal range of sparse graphs.

	We defined and studied a class of canonical pebble game constructions that correspond to either a
	 maps-and-trees or proper \ellteekay decomposition. This gives a new proof of the Tutte-Nash-Williams 
	arboricity theorem and a unified proof of the previously studied decomposition certificates of sparsity.
	Canonical pebble game constructions also show the relationship
	between the $\ell+1$ pebble condition, which applies to the upper range of 
	$\ell$, to matroid union augmenting paths, which do not apply in the upper range.

	\paragraph{Algorithmic consequences and open problems.}
	In \cite{gabow:forests:1992}, Gabow and Westermann give an $O(n^{3/2})$ algorithm for recognizing sparse
	graphs in the lower range and extracting sparse subgraphs from dense ones.  Their technique 
	is based on efficiently finding matroid union augmenting paths, which extend  a maps-and-trees
	decomposition.  The $O(n^{3/2})$ algorithm uses two subroutines to find augmenting paths: {\bf 
	cyclic scanning}, which finds augmenting paths one at a time, and {\bf batch scanning}, which 
	finds groups of disjoint augmenting paths.
	
	We observe that \refalg{canonical-pebble-game} can be used to replace cyclic scanning in 
	Gabow and Westermann's algorithm without changing the running time.  The data structures 
	used in the implementation of the pebble game, detailed in \cite{cccg,pebblegame} are 
	simpler and easier to implement than those used to support cyclic scanning.
	
	The two major open algorithmic problems related to the pebble game are then:
	
	\begin{problem}
		Develop a pebble game algorithm with the properties of {\bf batch scanning} and
		obtain an implementable $O(n^{3/2})$ algorithm for the lower range.
	\end{problem}
	
	\begin{problem}
		Extend {\bf batch scanning} to the $\ell+1$ pebble condition and derive an $O(n^{3/2})$
		pebble game algorithm for the upper range.
	\end{problem}
	In particular, it would be of practical importance 
	to find an implementable $O(n^{3/2})$ algorithm for decompositions 
	into edge-disjoint spanning trees.


\begin{thebibliography}{21}
\providecommand{\natexlab}[1]{#1}
\providecommand{\url}[1]{#1}
\providecommand{\urlprefix}{URL }
\expandafter\ifx\csname urlstyle\endcsname\relax
  \providecommand{\doi}[1]{DOI~\discretionary{}{}{}#1}\else
  \providecommand{\doi}{DOI~\discretionary{}{}{}\begingroup
  \urlstyle{rm}\Url}\fi

\bibitem[{Berg and Jord\'{a}n(2003)}]{berg:jordan:2003}
Berg, A.R., Jord\'{a}n, T.: Algorithms for graph rigidity and scene analysis.
\newblock In: Proc. 11th European Symposium on Algorithms (ESA '03), LNCS, vol. 2832, pp. 78--89.  (2003)

\bibitem[{Crapo(1988)}]{crapo:rigidity:88}
Crapo, H.: On the generic rigidity of plane frameworks.
\newblock Tech. Rep. 1278, Institut de recherche d'informatique et
  d'automatique (1988)

\bibitem[{Edmonds(1965)}]{Ed65}
Edmonds, J.: Minimum partition of a matroid into independent sets.
\newblock J. Res. Nat. Bur. Standards Sect. B \textbf{69B}, 67--72 (1965)

\bibitem[{Edmonds(2003)}]{edmonds:matroidpolyhedra}
Edmonds, J.: Submodular functions, matroids, and certain polyhedra.
\newblock In: Combinatorial Optimization---Eureka, You Shrink!, no. 2570 in
  LNCS, pp. 11--26. Springer (2003)

\bibitem[{Gabow(1995)}]{gabow:jcss-1995}
Gabow, H.N.: A matroid approach to finding edge connectivity and packing
  arborescences.
\newblock Journal of Computer and System Sciences \textbf{50}, 259--273 (1995)

\bibitem[{Gabow and Westermann(1992)}]{gabow:forests:1992}
Gabow, H.N., Westermann, H.H.: Forests, frames, and games: Algorithms for
  matroid sums and applications.
\newblock Algorithmica \textbf{7}(1), 465--497 (1992)

\bibitem[{Haas(2002)}]{haas:2002}
Haas, R.: Characterizations of arboricity of graphs.
\newblock Ars Combinatorica \textbf{63}, 129--137 (2002)

\bibitem[{Haas et~al.(2007)Haas, Lee, Streinu, and Theran}]{maps}
Haas, R., Lee, A., Streinu, I., Theran, L.: Characterizing sparse graphs by map
  decompositions.
\newblock Journal of Combinatorial Mathematics and Combinatorial Computing
  \textbf{62}, 3--11 (2007)

\bibitem[{Hendrickson(1992)}]{hendrickson:uniqueRealizability:1992}
Hendrickson, B.: Conditions for unique graph realizations.
\newblock SIAM Journal on Computing \textbf{21}(1), 65--84 (1992)

\bibitem[{Jacobs and Hendrickson(1997)}]{jacobs:hendrickson:PebbleGame:1997a}
Jacobs, D.J., Hendrickson, B.: An algorithm for two-dimensional rigidity
  percolation: the pebble game.
\newblock Journal of Computational Physics \textbf{137}, 346--365 (1997)

\bibitem[{Laman(1970)}]{laman}
Laman, G.: On graphs and rigidity of plane skeletal structures.
\newblock Journal of Engineering Mathematics \textbf{4}, 331--340 (1970)

\bibitem[{Lee and Streinu(2008)}]{pebblegame}
Lee, A., Streinu, I.: Pebble game algorihms and sparse graphs.
\newblock Discrete Mathematics \textbf{308}(8), 1425--1437 (2008)

\bibitem[{Lee et~al.(2005)Lee, Streinu, and Theran}]{cccg}
Lee, A., Streinu, I., Theran, L.: Finding and maintaining rigid components.
\newblock In: Proc. Canadian Conference of Computational Geometry. Windsor,
  Ontario (2005).
\newblock \url{http://cccg.cs.uwindsor.ca/papers/72.pdf}

\bibitem[{Lee et~al.(2007{\natexlab{a}})Lee, Streinu, and Theran}]{graded}
Lee, A., Streinu, I., Theran, L.: Graded sparse graphs and matroids.
\newblock Journal of Universal Computer Science \textbf{13}(10)
  (2007{\natexlab{a}})

\bibitem[{Lee et~al.(2007{\natexlab{b}})Lee, Streinu, and Theran}]{sliders}
Lee, A., Streinu, I., Theran, L.: The slider-pinning problem.
\newblock In: Proceedings of the 19th Canadian Conference on Computational
  Geometry (CCCG'07) (2007{\natexlab{b}})

\bibitem[{Lov{\'{a}}sz(1979)}]{lovasz:combinatorial-problems}
Lov{\'{a}}sz, L.: Combinatorial Problems and Exercises.
\newblock Akademiai Kiado and North-Holland, Amsterdam (1979)

\bibitem[{Nash-Williams(1964)}]{nash-williams:decomposition-into-forests:1964}
Nash-Williams, C.S.A.: Decomposition of finite graphs into forests.
\newblock Journal of the London Mathematical Society \textbf{39}, 12 (1964)

\bibitem[{Oxley(1992)}]{oxley:matroid}
Oxley, J.G.: Matroid theory.
\newblock The Clarendon Press, Oxford University Press, New York (1992)

\bibitem[{Roskind and Tarjan(1985)}]{RoTa85}
Roskind, J., Tarjan, R.E.: A note on finding minimum cost edge disjoint
  spanning trees.
\newblock Mathematics of Operations Research \textbf{10}(4), 701--708 (1985)

\bibitem[{Streinu and Theran(2008)}]{genericity}
Streinu, I., Theran, L.: Combinatorial genericity and minimal rigidity.
\newblock In: SCG '08: Proceedings of the twenty-fourth annual Symposium on
  Computational Geometry, pp. 365--374. ACM, New York, NY, USA (2008).

\bibitem[{Tay(1984)}]{tay:rigidityMultigraphs-I:1984}
Tay, T.S.: Rigidity of multigraphs {I}: linking rigid bodies in $n$-space.
\newblock Journal of Combinatorial Theory, Series B \textbf{26}, 95--112 (1984)

\bibitem[{Tay(1993)}]{Tay93}
Tay, T.S.: A new proof of {L}aman's theorem.
\newblock Graphs and Combinatorics \textbf{9}, 365--370 (1993)

\bibitem[{Tutte(1961)}]{tutte:decomposing-graph-in-factors-1961}
Tutte, W.T.: On the problem of decomposing a graph into $n$ connected factors.
\newblock Journal of the London Mathematical Society \textbf{142}, 221--230 (1961)

\bibitem[{Whiteley(1988)}]{whiteley:union-matroids}
Whiteley, W.: The union of matroids and the rigidity of frameworks.
\newblock SIAM Journal on Discrete Mathematics \textbf{1}(2), 237--255 (1988)

\end{thebibliography}
\end{document}